\documentclass[a4paper,10pt]{article}
\pdfoutput=1
\usepackage{latexsym}
\usepackage[ansinew]{inputenc}

\usepackage{amsthm}
\usepackage{amsmath}
\usepackage{amssymb}
\usepackage{amsfonts}

\usepackage{array}
\usepackage{float}
\usepackage{caption2}
\usepackage{graphicx}

\usepackage{anysize}
\marginsize{1.5cm}{1.5cm}{1.5cm}{1.5cm}

\usepackage{tabularx}

\parskip 1.5mm
\setlength\parindent{0cm}

\setcaptionwidth{0.95\textwidth}%

\newcounter{remark}
\setcounter{remark}{0}
\newcommand{\remark}{\addtocounter{remark}{1}\textbf{Remark
\theremark. }}

\newcommand{\beq}{\begin{equation}}
\newcommand{\enq}{\end{equation}}
\newtheorem{lemma}{Lemma}
\newtheorem{proposition}{Proposition}
\newtheorem{theorem}{Theorem}
\newtheorem{corollary}{Corollary}
\newtheorem{definition}{Definition}

\normalsize \pagestyle{plain}

\title{$PSL(2,\mathbb{Z})$ as a non distorted subgroup of Thompson's group T}
\author{Ariadna Fossas Tenas\\
Institut Fourier\\
Universit\'e de Grenoble I\\
38402 St. Martin d'H\`eres. France\\
ariadna.fossas@ujf-grenoble.fr}
\date{}

\begin{document}

\maketitle

\begin{abstract}
In this paper we characterize the elements of $PSL_2(\mathbb{Z})$,
as a subgroup of Thompson group $T$, in the language of reduced
tree pair diagrams and in terms of piecewise linear maps as well.
Actually, we construct the reduced tree pair diagram for every
element of $PSL_2(\mathbb{Z})$ in normal form. This allows us to
estimate the length of the elements of $PSL_2(\mathbb{Z})$ through
the number of carets of their reduced tree pair diagrams and, as a
consequence, to prove that $PSL_2(\mathbb{Z})$ is a non distorted
subgroup of $T$. In particular, we find non-distorted free non
abelian subgroups of $T$.
\end{abstract}

\textbf{MSC classsification:} 20F65.

\textbf{Keywords:} Thompson's group, $PSL_2(\mathbb{Z})$,
distortion, free group, free subgroups, rooted binary trees,
Minkowski question mark function, piecewise projective
homeomorphisms.

\section{Introduction}

Thompson's group $T$ was one of the first examples of finitely
presented infinite simple groups. There are at least three
different ways of representing the elements of Thompson's group
$T$ (see \cite{CFP} for a detailed introduction to Thompson's
groups). Probably the most common interpretation is as a subgroup
of the group of homeomorphisms of the circle, thought as the unit
interval with identified endpoints. Then, $T$ is the group of
orientation preserving piecewise linear homeomorphisms of the
circle that are differentiable except at finitely many dyadic
rational numbers, and such that, on intervals of
differentiability, the derivatives are powers of 2 (see, for
example, \cite{sergiescuGhys}).

Another interesting approach to Thompson's group $T$ describes it
as equivalence classes of tree pair diagrams. A tree pair diagram
is a pair of finite rooted binary trees with the same number of
leaves, with a cyclic numbering system pairing the leaves in the
two trees. We say that two different tree pair diagrams are
equivalent if they have the same reduced representant. This
combinatorial version of Thompson's group $T$ allowed Burillo,
Cleary, Stein and Taback to discuss metric properties of $T$ (see
\cite{BCST}). In particular, they found an estimation of the word
length in terms of the number of carets in a reduced tree pair
diagram, and showed that some subgroups of $T$ are undistorted.

We need to mention a third viewpoint about Thompson's group $T$.
We can represent $T$ as a subgroup of the group of piecewise
projective orientation preserving homeomorphisms of the real
projective line $\mathcal{R}P^1$. This result was claimed
separately by Thurston (see \cite{CFP}) and Kontsevich, and then
proved by Imbert (see \cite{imbert}) and Sergiescu (see
\cite{sergiescu}). It turns out that $T$ is isomorphic to
$PPSL_2(\mathbb{Z})$, which is the group of orientation preserving
homeomorphisms of the real projective line which are piecewise
$PSL_2(\mathbb{Z})$, and have a finite numbers of non
differentiable points, all of them being rational numbers. This
geometric approach is also related to the fact that one can
actually see Thompson's group $T$ as the asymptotic mapping class
group of an infinite surface of genus zero (see
\cite{funarKapoudjian}).

The piecewise projective approach to Thompson's group $T$ invites
us to study the projective special linear group
$PSL_2(\mathbb{Z})$ as a subgroup of $T$ using the combinatorial
methods of tree pair diagrams. It turns out that all the reduced
tree pair diagrams representing elements of $PSL_2(\mathbb{Z})$ as
a subgroup of Thompson's group $T$ have exactly one leaf and one
interior vertex for each level. In fact, we provide an explicit
bijection between tree pair diagrams of elements in
$PSL_2(\mathbb{Z})$ and the corresponding reduced words on the
classical generators of $PSL_2(\mathbb{Z})$ of order 2 and 3 (see
\cite{humphreys}). This characterization of $PSL_2(\mathbb{Z})$ in
terms of tree pair diagrams together with the results of Burillo,
Cleary, Stein and Taback in the word metric on $T$ (see
\cite{BCST}) allow us to prove that $PSL_2(\mathbb{Z})$ is a non
distorted subgroup of Thompson's group $T$.

Although it was already known that $T$ has subgroups isomorphic to
the free non abelian group of rank 2 (this is the usual way to
proof the non amenability of $T$), it was not known if these
subgroups are distorted. The piecewise projective approach
together with our results give an easy example of a non distorted
subgroup of $T$ isomorphic to $F_2$. There are several normal
subgroups of $PSL_2(\mathbb{Z})$ isomorphic to the free non
abelian group of rank two. One particular example is the
commutator group of $PSL_2(\mathbb{Z})$ (see \cite{humphreys}). In
addition, this subgroup has finite index on $PSL_2(\mathbb{Z})$.
Now, using the fact that finite index subgroups are non distorted
and our result that $PSL_2(\mathbb{Z})$ is non distorted as a
subgroup of Thompson's group $T$, we have constructed a non
distorted subgroup of Thompson's group $T$ isomorphic to the free
non abelian group of rank 2.

\subsection{Definitions and statements of the results}

First we state the definitions related to the three different ways
of representing Thompson's group $T$ introduced above:

\begin{definition}
The {\rm{piecewise linear Thompson's group $T$}} is the set of
orientation preserving piecewise linear homeomorphisms of the
circle $S^1=[0, 1]/_{0\sim1}$ that are differentiable except at
finitely many dyadic rational numbers and such that, on intervals
of differentiability, the derivatives are powers of 2.
\end{definition}

For the combinatorial definition we need the following notion:

\begin{definition} A {\rm{tree pair diagram}} is a triple
$(T_1,\sigma,T_2)$, where $T_1$ and $T_2$ are finite rooted binary
trees with the same number $n$ of leaves, and $\sigma$ is a cyclic
permutation of the set $\{1,\ldots, n\}$. The binary tree $T_1$ is
called the {\rm{source tree}}, and $T_2$ is called the {\rm{target
tree}}. A node of an ordered rooted binary tree together with its
two downward directed edges is called a {\rm{caret}}. A caret is
called {\rm{exposed}} if it contains two leaves of the tree. A
tree pair diagram is {\rm{reduced}} if, for all exposed carets of
$T_1$, the images under $\sigma$ of their leaves do not form an
exposed caret of $T_2$.
\end{definition}

When elements of Thompson's group $T$ are represented as
equivalence classes of tree pair diagrams, the multiplication of
$(T_1,\sigma_1, T_2)$ by $(T_3,\sigma_2,T_4)$ is described by the
following procedure.
\begin{enumerate}
\item Let $T_{23}$ be an ordered rooted binary tree which is a
common expansion of $T_2$ and $T_3$. This element always exists.
\item Let $(T_1',\sigma_1',T_{23})$ and $(T_{23},\sigma_2',T_4')$
be the tree pair diagrams which are equivalent to $(T_1,\sigma_1,
T_2)$ and $(T_3,\sigma_2,T_4)$, respectively. \item Then, one sets
$(T_1,\sigma_1, T_2) * (T_3,\sigma_2,T_4) =(T_1',
\sigma_1'\sigma_2', T_4')$, and reduces the obtained tree pair
diagram, if possible.
\end{enumerate}

\begin{definition}
  The {\rm{combinatorial Thompson's group $T$}} is the set of
  equivalence classes of tree pair diagrams.
\end{definition}

The third viewpoint of Thompson's group $T$ connects it with the
projective special linear group $PSL_2(\mathbb{Z})$ (see, for
example, \cite{martin}).

\begin{definition} The {\rm{piecewise projective Thompson's group $T$}}
($PPSL_2(\mathbb{Z})$) is the group of orientation preserving
homeomorphisms of the real projective line $\mathbb{R}P^1$ which
are piecewise $PSL_2(\mathbb{Z})$ and have a finite number of non
differentiable points, all of them being rational numbers.
\end{definition}

Now we need a few properties of the projective special linear
group. The group $PSL_2(\mathbb{Z})$ is isomorphic to the free
product $\mathbb{Z}/2\mathbb{Z}\ast\mathbb{Z}/3\mathbb{Z}$, which
means that it admits a generating set $\{a,b\}$, where $a^2=1$ and
$b^3=1$ (see \cite{arbres}, page 20 example 1.5.3). Then, every
element of $PSL_2(\mathbb{Z})$ has a normal form in the generators
$a$ and $b$ given by a word of the form
$a^{\epsilon_1}b^{\delta_1}ab^{\delta_2}a \ldots
ab^{\delta_k}a^{\epsilon_2}$, where $k$ is a non negative integer,
$\epsilon_1,\epsilon_2 \in \{0,1\}$ and
$\delta_1,\ldots,\delta_k\in \{-1,1\}$. We will denote by $a$ and
$b$ both the generators of $PSL_2(\mathbb{Z})$ as a group of
matrices and their images in $T$ under the isomorphism between
$PPSL_2(\mathbb{Z})$ and Thompson's group $T$.

In this paper we give a characterization of the elements of
$PSL_2(\mathbb{Z}) \subset T$ in terms of reduced tree pair
diagrams. Before stating the theorem we need the following
definition:

\begin{definition} A finite rooted binary tree is called {\rm{thin}} if
all its carets have one leave and one internal vertex except for
the last caret, which is exposed.
\end{definition}

\remark It is easy to see that one can characterize thin trees
with $n$ leaves by associating a weight $r_i \in \{-1,1\}$ to each
one of its $n-2$ internal vertices. Let $v_0, \ldots, v_{n-3}$ be
the internal vertices of a thin tree. Denote by $v_{-1}$ its root.
Then, $r_i=1$ if $v_i$ is the left descendant of $v_{i-1}$ and
$r_i=-1$ if $v_i$ is the right descendant of $v_{i-1}$.

The main result of this note is the following:

\begin{theorem}
  Let $(T_1,\sigma,T_2)$ be the reduced tree pair diagram of an element $f$ of Thompson's group $T$.
  Then, $f$ belongs to the subgroup $PSL_2(\mathbb{Z})$ if and only if $(T_1,\sigma,T_2)$
  satisfies one of the following conditions:
  \begin{enumerate}
    \item the number of leaves of $T_1$ and $T_2$ is less than 4; or
    \item the trees $T_1$ and $T_2$ are thin with associated weights $r_0, \ldots, r_{k-1}$ and $s_0, \ldots,
    s_{k-1}$ respectively, which verify the equations:
    \beq \sum_{i=2}^{k-1}r_i s_{k+1-i}= 2 - k, \label{eq1} \enq
    and
    \beq l + \sigma(1) + \epsilon(s_0) \equiv  \frac{3 - s_1}{2} \quad
    ({\rm{mod}} \, k+2),\label{eq2}
    \enq
    where $l-1$ is the cardinal of the set $\{i \, : \, r_i=-1, 0 \leq i \leq k-1\}$, and
    $\epsilon(x) = \left\{ \begin{array}{cl}
    1, & \text{\rm{if} } x = 1,\\
    0, & \text{\rm{if} } x = -1.
    \end{array} \right.$
  \end{enumerate} \label{mainThm}
\end{theorem}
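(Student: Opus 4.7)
The plan is to prove both directions of the equivalence by induction on the length $k$ of the normal form
\[
a^{\epsilon_1} b^{\delta_1} a b^{\delta_2} a \cdots a b^{\delta_k} a^{\epsilon_2}
\]
of an element $f \in PSL_2(\mathbb{Z})$, exploiting the concrete isomorphism between $T$ and $PPSL_2(\mathbb{Z})$. First I would compute explicitly the reduced tree pair diagrams of the generators $a$ and $b$, together with the diagrams of every normal-form word with $k \le 1$; these all have at most three leaves and constitute case 1 of the theorem. The word of length $k=2$ then serves as the base case for case 2, where (\ref{eq1}) is vacuous ($2-k=0$, empty sum) and only (\ref{eq2}) needs to be checked by hand.

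For the forward direction, I would assume inductively that the reduced diagram of a normal-form word $w$ of length $k$ is thin and satisfies (\ref{eq1}) and (\ref{eq2}), and then right-multiply $w$ by $a$ or by $b^{\pm 1}$. Because the diagrams of the generators are explicit and small, the common-expansion step in the multiplication procedure can be carried out by hand: multiplying by $b^{\pm 1}$ adds exactly one new caret at a specific leaf of each tree, preserving thinness and extending the weight sequences by a single term whose sign is dictated by $\delta_{k+1}$, while multiplying by $a$ toggles $\epsilon_2$ and cyclically relabels the leaves of $T_2$. At each step I would verify that (\ref{eq1}) is precisely the condition preventing a newly exposed caret on the top from being paired by $\sigma$ with an exposed caret on the bottom, so that the diagram remains reduced, which corresponds exactly to the word remaining in normal form; and that (\ref{eq2}) records the updated value of $\sigma(1)$.

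For the converse direction, I would read off a candidate word from a thin diagram satisfying the two equations: the weight sequences $(r_i)$ and $(s_i)$ determine the exponents $\delta_i$, while $\sigma(1)$ together with (\ref{eq2}) pins down $\epsilon_1$ and $\epsilon_2$. Composing this decoding with the forward direction and invoking uniqueness of the normal form in $\mathbb{Z}/2\mathbb{Z} \ast \mathbb{Z}/3\mathbb{Z}$ gives an inverse to the assignment $f \mapsto (T_1, \sigma, T_2)$ built inductively above, completing the biconditional.

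The hard part will be the bookkeeping for the cyclic permutation $\sigma$ during the inductive step. Because $T$ uses cyclic rather than ordered labeling of leaves, after each multiplication the labels of the target tree must be rotated so that leaf $1$ lies in the correct position of the expanded tree, and tracking this rotation modulo $k+2$ is precisely what produces the seemingly opaque congruence (\ref{eq2}). In particular, I expect to need a standalone combinatorial lemma describing when exposed carets can cancel between two thin trees, in order to justify the specific index range $2 \le i \le k-1$ in (\ref{eq1}) and to explain why the extreme weights $r_0, r_1, s_0, s_1$ influence reducedness only through the permutation datum recorded by (\ref{eq2}).
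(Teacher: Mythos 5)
Your overall architecture is essentially the paper's. The forward half is exactly Proposition \ref{treesWord}: induction on the normal form $a^{\epsilon_1}b^{\delta_1}a\cdots ab^{\delta_k}a^{\epsilon_2}$, base case $k=2$ (where (\ref{eq1}) is indeed vacuous), and an inductive step that appends $a$ when $\epsilon_2=0$ and $b^{\pm1}$ when $\epsilon_2=1$, tracking the weights and $\sigma(1)$. For the converse the paper argues by counting: since (\ref{eq1}) is a sum of $k-2$ terms each equal to $\pm1$, it forces $r_i s_{k+1-i}=-1$ for every $i$, so a fixed thin source tree admits only four admissible target trees (the choices of $s_0,s_1$), and for each of these (\ref{eq2}) determines $\sigma(1)$, hence $\sigma$, uniquely; as exactly four normal-form words have that source tree and words inject into reduced diagrams, the two sets coincide. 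Your decode-then-apply-the-forward-direction inverse is a legitimate variant of this, but it needs the same term-wise observation made explicit: the middle weights $s_2,\dots,s_{k-1}$ of the given diagram are redundant data, and you must use (\ref{eq1}) to show they equal $-r_{k+1-i}$, since otherwise the diagram produced from your decoded word need not be the diagram you started from. (Uniqueness of normal forms in $\mathbb{Z}/2\mathbb{Z}\ast\mathbb{Z}/3\mathbb{Z}$ is not actually needed for this direction.)

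Two of your glosses on the inductive step are wrong as stated and would derail you if taken literally. Equation (\ref{eq1}) is not the condition preventing a newly exposed caret of $T_1$ from being paired by $\sigma$ with an exposed caret of $T_2$: reducedness of a thin--thin diagram is a statement about $\sigma$ and the positions of the two unique exposed carets, and in the induction it is simply inherited --- the expansions forced by appending $b^{\pm1}$ (splitting leaf $k+2$ of $T_2$ and the leaf $p$ of $T_1$ with $\sigma(p)=k+2$) cannot create a matched exposed pair when the previous diagram was reduced, which is how the paper disposes of it. What (\ref{eq1}) actually encodes is the palindromic relation $s_i=-r_{k+1-i}$ between interior weights, i.e.\ that $T_2$ is built from the word read backwards; it has nothing to do with reducedness or with the word staying in normal form. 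Relatedly, in the $b^{\pm1}$ step the new source weight is $r_k'=\delta_k$ (it is fixed by where the forced expansion of $T_1$ lands, hence by the previous letter), while only the new target weight $s_1'=-\delta_{k+1}$ is dictated by the appended letter; and appending $a$ does change the shape of $T_2$ --- it swaps the two subtrees at the root, i.e.\ flips $s_0$ --- rather than merely relabelling leaves. None of this breaks your plan, but the bookkeeping you yourself flag as the hard part must be done with these corrections, at which point it reproduces the paper's case analysis (cases a and b1--b4).
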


This answers a question posed by Vlad Sergiescu.

In fact, given an element of $PSL_2(\mathbb{Z})$ as a subgroup of
Thompson's group $T$ in its normal form on the generating set
$\{a,b\}$, we describe an easy way to construct its reduced tree
pair diagram.

\begin{proposition}
  Let $w(a,b) = a^{\epsilon_1}b^{\delta_1}a b^{\delta_2}a \ldots a b^{\delta_k}a^{\epsilon_2}$
  be a reduced word in the standard generators $\{a,b\}$ of $PSL_2(\mathbb{Z})$, viewed as a subgroup of Thompson's group
  $T$, i.e. $\epsilon_1,\epsilon_2 \in \{0,1\}$ and $\delta_1,\ldots,\delta_k\in \{-1,1\}$. Assume that $k \geq 2$.
  Let $T_1$ be the thin tree given by the weights $r_0 = \epsilon^{-1}(\epsilon_1)$ and $r_i = \delta_i$ for $1 \leq
  i \leq k-1$, and let $T_2$ be the thin tree given by the weights $s_0 = \epsilon^{-1}(\epsilon_2)$ and $s_i = -\delta_{k+1-i}$ for
  $1 \leq i \leq k-1$. Let $\sigma$ be the cyclic permutation
  defined by
  \beq \sigma(1) \equiv \frac{3-s_1}{2} - \epsilon(s_0) - l \quad ({\rm{mod}} \, k+2),\label{eq3} \enq
  where  $l$ and $\epsilon$ are defined as in theorem 1.
  Then, $(T_1, \sigma, T_2)$ is the reduced tree pair diagram for $w$.\\
  Furthermore, the weights $r_0, \ldots, r_{k-1}$ and $s_0,\ldots, s_{k-1}$
  satisfy the equations of theorem \ref{mainThm}. \label{treesWord}
\end{proposition}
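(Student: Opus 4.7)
The natural plan is to prove the proposition by induction on $k$, the number of $b^{\pm 1}$ factors in the normal form of $w$. For the base case $k=2$, I would first write down the reduced tree pair diagrams of the generators $a$, $b$, and $b^{-1}$ themselves, by translating the corresponding matrices in $PSL_2(\mathbb{Z})$ to piecewise linear homeomorphisms of $S^1$ via the isomorphism $T \cong PPSL_2(\mathbb{Z})$ and reading off the breakpoints. Using these explicit generators together with the multiplication rule recalled in the paper (common expansion, composition of permutations, reduction), the finitely many subcases for $k=2$ indexed by $(\epsilon_1,\epsilon_2,\delta_1,\delta_2) \in \{0,1\}^2 \times \{\pm 1\}^2$ are then a direct verification against the formulas for $T_1$, $T_2$ and $\sigma(1)$ predicted by the statement.

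For the inductive step, suppose the proposition is proven for normal forms of length $< k$, and write $w = w' \cdot g$ where $g$ is the last non-trivial factor of the normal form of $w$, so that $w'$ has one fewer factor and $g \in \{a, b, b^{-1}\}$. By the inductive hypothesis, the reduced tree pair diagram $(T_1', \sigma', T_2')$ of $w'$ is of the predicted thin shape with known weights and known value of $\sigma'(1)$. I would then carry out the product $w' \cdot g$ via the multiplication procedure: take a minimal common expansion of $T_2'$ with the source tree of the diagram of $g$, extend $T_1'$ and the target tree of $g$ accordingly, compose the two cyclic permutations, and reduce if necessary. The three things to verify are: (i) the new source tree equals $T_1$, i.e.\ the weights $r_0, \ldots, r_{k-1}$ are unchanged since we multiplied on the right; (ii) the new target tree is thin with exactly one additional caret at the bottom whose weight matches the prescription $s_i = -\delta_{k+1-i}$ (or, when $g = a^{\epsilon_2}$, only $s_0$ is toggled); and (iii) the shift of $\sigma(1) \pmod{k+2}$ produced by the extra caret matches formula \eqref{eq3}, through the update of the parameters $l$, $\epsilon(s_0)$ and $s_1$.

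Once the trees and the permutation are correctly identified, I would verify that $(T_1, \sigma, T_2)$ is reduced. Since thin trees have a unique exposed caret (the lowest one), reducedness boils down to a single numerical condition: $\sigma$ must not send the pair of leaves of the exposed caret of $T_1$ onto the pair of leaves of the exposed caret of $T_2$. This follows from \eqref{eq3} together with the assumption that $w$ is in normal form (so $\delta_k \neq 0$ and there is no possible cancellation of the last factor). Finally, the concluding claim that \eqref{eq1} and \eqref{eq2} of Theorem \ref{mainThm} hold is immediate from the definitions: the identity $r_i s_{k+1-i} = \delta_i \cdot (-\delta_i) = -1$ for $2 \leq i \leq k-1$ yields $\sum_{i=2}^{k-1} r_i s_{k+1-i} = 2-k$, and \eqref{eq2} is just \eqref{eq3} rearranged.

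The main obstacle is the bookkeeping in step (iii). Tracking how the insertion of a single caret in $T_2'$ shifts the value of $\sigma(1)$ modulo $k+2$, and matching the shift against the integer parameters $l$, $\epsilon(s_0)$ and $s_1$ appearing in \eqref{eq3}, requires a careful case analysis depending on whether the new caret sits as a left or right descendant (the sign of $s_i$) and on the relative position of the leaf labelled $1$ after the common refinement. This computation is elementary but delicate and constitutes the technical core of the argument; the rest of the proof reduces to applying the multiplication rule and recording which weights change at each step.
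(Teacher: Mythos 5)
Your overall strategy coincides with the paper's: induction on the length of the normal form, multiplying the diagram of the shorter word on the right by the last letter via the common-expansion rule, an explicit base case at $k=2$, a reducedness check, and the closing observation that (\ref{eq1}) and (\ref{eq2}) follow from the prescribed weights together with (\ref{eq3}) (which is indeed immediate, and arguably cleaner than re-verifying the equations inductively as the paper does).

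There is, however, a genuine gap in your step (i). It is not true that the source tree is left untouched ``since we multiplied on the right.'' When the last letter is $g=b^{\pm1}$, the shorter word $w'$ ends in $a$, so its target tree is thin with $s_0=1$: the right child of its root is a leaf, whereas the source tree of $b^{\pm1}$ carries a caret exactly there. The common expansion therefore forces a split of the last leaf of the target tree of $w'$, and hence also of the leaf $p$ of its source tree that $\sigma'$ sends to that last leaf. The new source tree is the old one with an extra caret, and nothing a priori guarantees that it is still thin, let alone that its new bottom weight equals $\delta_{k-1}$ as the proposition prescribes; if $p$ were any leaf other than one of the two leaves of the exposed caret, the resulting tree would not be thin and the induction would not close. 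This is precisely the technical heart of the paper's inductive step: using the induction hypothesis (\ref{eq3}) for $w'$ one computes $p$ (namely $p\equiv 1-\sigma'(1)$ modulo the number of leaves) and shows $p\in\{l,l+1\}$, the side being governed by the sign of the last $\delta$ of $w'$; only after this is the expanded source tree thin with the correct new weight, and only then does the bookkeeping for $\sigma(1)$ in your step (iii) make sense. A smaller slip of the same kind occurs in your step (ii): for $g=b^{\pm1}$ the new target tree is an expansion of the \emph{target tree of} $b^{\pm1}$, not of that of $w'$; in terms of weights, $s_0$ flips to $-1$, a new weight $-\delta_k$ is inserted at index $1$ and the old weights shift down, rather than ``one additional caret at the bottom.'' So the plan is the right one, but as written the inductive step omits the key computation that makes it work.
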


This proposition proves the `if' part of the main theorem. For the
`only if' part it suffices to show that the number of solutions to
the equations (\ref{eq1}) and (\ref{eq2}) coincides with the
number of words $w(a,b) = a^{\epsilon_1}b^{\delta_1}a
b^{\delta_2}a \ldots a b^{\delta_k}a^{\epsilon_2}$, with
$\epsilon_1,\epsilon_2 \in \{0,1\}$ and
$\delta_1,\ldots,\delta_k\in \{-1,1\}$.

Now, we can ask whether or not $PSL_2(\mathbb{Z})$ is distorted as
a subgroup of Thompson's group $T$. Recall that a finitely
generated subgroup $H$ of a finitely generated group $G$ is
distorted if there exists an infinite family $\{h_n\}_{n \in
\mathbb{N}}$ of elements of $H$ such that
\begin{enumerate}
 \item $\displaystyle |h_n|_Z < |h_{n+1}|_Z$,
 \item $\displaystyle \lim_{n \rightarrow \infty} |h_n|_Z = \infty$, and
 \item the limit
 $ \lim_{n \rightarrow \infty} \frac{|h_n|_Z}{|h_n|_Y}$
 exists and is equal to zero,
\end{enumerate}
where $Z$ is a finite generating set for the group $G$ which
contains a finite generating set $Y$ for the subgroup $H$, and
$|.|_Z$ and $|.|_Y$ denote the word metrics on the generating sets
$Z$ and $Y$, respectively. Observe that $|.|_Z \leq |.|_Y$. See
\cite{grAsympInv} for another definition of distortion.

\remark $H$ is a non distorted subgroup of $G$ if and only if
there are constants $K > 0$ and $L$ such that
$$ \frac{1}{K}|h|_Y - L \leq |h|_Z \leq K|h|_Y + L$$
holds for every $h \in H$. \label{dist}

Using theorem \ref{mainThm} and proposition \ref{treesWord} we can
estimate the length of the elements in this subgroup. Furthermore,
Burillo, Cleary, Stein and Taback gave a general estimation of the
length of elements in $T$ in the number of carets in \cite{BCST}.
As a consequence of both results, we obtain:

\begin{proposition}
  The group $PSL_2(\mathbb{Z})$ is a non distorted subgroup of
  Thompson's group $T$. \label{noDist}
\end{proposition}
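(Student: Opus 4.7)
The plan is to combine Proposition \ref{treesWord}, which gives the exact number of carets in the reduced tree pair diagram of an element of $PSL_2(\mathbb{Z})$, with the lower bound of Burillo, Cleary, Stein and Taback \cite{BCST} for the word length in $T$ in terms of the caret count of a reduced tree pair diagram. Fix a finite generating set $Z$ of $T$ containing $Y=\{a,b\}$. The trivial inequality $|f|_Z\leq|f|_Y$ furnishes one half of the two-sided bound demanded by Remark \ref{dist}, so the whole task reduces to producing constants $K,L$ with $|w|_Y\leq K|w|_Z+L$ for every $w\in PSL_2(\mathbb{Z})$.

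First I would compare the normal-form length of an element $w=a^{\epsilon_1}b^{\delta_1}a\cdots ab^{\delta_k}a^{\epsilon_2}$ to the caret count of its reduced diagram. Counting letters, and using that $b^{-1}=b^2$ has length $2$ in $Y$, yields $|w|_Y\leq 3k+1$ (and $|w|_Y\geq 2k-1$) for $k\geq 2$, so $|w|_Y$ is linear in $k$. On the tree side, Proposition \ref{treesWord} asserts that for $k\geq 2$ the reduced tree pair diagram of $w$ is built from two thin trees with $k+2$ leaves, hence exactly $k+1$ carets in each tree. Consequently the caret count $N(w)$ of the reduced diagram of $w$ is linearly comparable to $|w|_Y$.

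The main external input is the estimate of \cite{BCST}: there exist constants $C_1,C_2>0$, depending only on $Z$, such that $|f|_Z\geq C_1^{-1}N(f)-C_2$ for every $f\in T$. Applied to $w$ this gives $|w|_Z\geq C_1^{-1}(k+1)-C_2$, which combined with the linear upper bound on $|w|_Y$ in terms of $k$ produces the desired inequality $|w|_Y\leq K|w|_Z+L$ after absorbing constants. The elements of case (1) of Theorem \ref{mainThm} (trees with fewer than four leaves) and the normal forms with $k\leq 1$ are finite in number, so they only enlarge $L$. The single non-trivial ingredient is the BCST caret estimate; everything else is the dictionary between normal forms and tree pair diagrams supplied by Proposition \ref{treesWord}, so the main obstacle has been discharged before we reach this proposition.
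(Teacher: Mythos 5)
Your proposal is correct and follows essentially the same route as the paper: read off from Proposition \ref{treesWord} that the reduced diagram of $w=a^{\epsilon_1}b^{\delta_1}a\cdots ab^{\delta_k}a^{\epsilon_2}$ has $k+1$ carets, note that $|w|_{a,b}$ is linear in $k$, and invoke the caret estimate of Burillo, Cleary, Stein and Taback to bound the word length in $T$ from below, with the finitely many small cases absorbed into the additive constant. The only cosmetic differences are that the paper works with the explicit generating set $\{A,B,C,a,b\}$ (using $a=CA$, $b=C$) and tracks both inequalities with explicit constants $K'=4K$, $L=3K$, whereas you use a general finite $Z\supseteq\{a,b\}$ and dismiss one direction as trivial.
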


In particular, we derive:

\begin{corollary}
  Let $\{a,b\}$ be the standard generating set of $PSL_2(\mathbb{Z})$ as a subgroup of Thompson's group $T$.
  Then, the subgroup $H=\left<abab,a\bar{b}a\bar{b}\right>$ is a free non abelian group of rank 2
  and it is non distorted in $T$.
\end{corollary}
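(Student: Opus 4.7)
The corollary has two parts: (a) $H$ is abstractly free of rank two, and (b) $H$ is non-distorted in $T$. Part (b) follows from Proposition~\ref{noDist} combined with non-distortion of $H$ in $PSL_2(\mathbb{Z})$, by composing the two bi-Lipschitz estimates afforded by Remark~\ref{dist}.

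For (a) I would first pass to the index-two subgroup $N=\ker\phi$ of $PSL_2(\mathbb{Z})$, where $\phi\colon PSL_2(\mathbb{Z})\to\mathbb{Z}/2\mathbb{Z}$ sends $a\mapsto 1$ and $b\mapsto 0$. A Reidemeister--Schreier (or equivalently Bass--Serre) computation identifies $N$ with the free product $\mathbb{Z}/3\mathbb{Z}\ast\mathbb{Z}/3\mathbb{Z}=\langle c\rangle\ast\langle d\rangle$, where $c=aba$ and $d=b$. In these coordinates the generators of $H$ read $g:=abab=cd$ and $h:=a\bar b a\bar b=c^{-1}d^{-1}$, so the whole problem lives inside $\langle c\rangle\ast\langle d\rangle$. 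Each of $g^{\pm 1},h^{\pm 1}$ is a two-syllable word in $c,d$ whose two syllables are of different types. An inspection of the twelve admissible junctions $X_1X_2$ (those with $X_2\neq X_1^{-1}$) shows that the only collapses that can occur are $d^{+1}d^{+1}\to d^{-1}$ in $gh^{-1}$, $c^{-1}c^{-1}\to c$ in $g^{-1}h$, $d^{-1}d^{-1}\to d$ in $hg^{-1}$, and $c^{+1}c^{+1}\to c^{-1}$ in $h^{-1}g$. Each collapse produces a single non-trivial syllable, and because the two syllables of each letter are of different types these collapses cannot cascade to neighbouring junctions. Hence a reduced word of length $n\ge 1$ in $\{g^{\pm 1},h^{\pm 1}\}$ yields a reduced word of at least $2n-(n-1)=n+1$ syllables in $\langle c\rangle\ast\langle d\rangle$, which is non-trivial. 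Therefore $g$ and $h$ freely generate $H$.

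For the remaining half of (b), I would invoke the classical fact that every finitely generated subgroup of a virtually free group is non-distorted. The commutator subgroup $C$ of $PSL_2(\mathbb{Z})$ is free of rank two and of finite index (as noted in the introduction), hence non-distorted in $PSL_2(\mathbb{Z})$. The intersection $H\cap C$ has finite index in $H$ and is therefore non-distorted in $H$; as a finitely generated subgroup of the free group $C$ it is also non-distorted in $C$. Composing these bi-Lipschitz estimates with Proposition~\ref{noDist} gives the required bi-Lipschitz bound between the $H$-word length and the $T$-word length, completing the proof. The only non-routine step is the freeness in (a); the main pitfall there---ensuring that the syllable-level collapses do not cascade---is handled by the observation that each $g^{\pm 1}$ and $h^{\pm 1}$ has its two syllables of different types.
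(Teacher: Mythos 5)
Your argument is correct, but it takes a genuinely different route from the paper on both halves of the statement. For freeness the paper does not argue at all: it cites de la Harpe (p.~26) and Lyndon--Schupp (Prop.~III.12.3), whereas you reprove it by passing to the index-two subgroup $\ker(a\mapsto 1,\ b\mapsto 0)\cong \mathbb{Z}/3\ast\mathbb{Z}/3=\langle aba\rangle\ast\langle b\rangle$ and running a syllable analysis on $g=cd$, $h=c^{-1}d^{-1}$; your checks are right (exactly the four junctions you list collapse, a collapse in a $\mathbb{Z}/3$ factor never annihilates a syllable, and collapses cannot cascade because each letter has two syllables of different types), so the bound of at least $n+1$ syllables for a reduced word of length $n$ is valid and the freeness part stands on its own. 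For non-distortion the paper asserts that $H$ itself has finite index in $PSL_2(\mathbb{Z})$, identifying it with the kernel of $PSL_2(\mathbb{Z})\to\mathbb{Z}/2\times\mathbb{Z}/3$, and then applies Proposition~\ref{noDist}; note that this identification is off as stated, since $abab$ and $a\bar{b}a\bar{b}$ have image $(0,\pm 2)\neq(0,0)$ --- the correct description is that $H=\bar{\Gamma}(2)$, the kernel of reduction mod $2$, which still has index $6$, so the paper's finite-index strategy survives after that repair. Your route never needs the index of $H$: you go through the commutator subgroup $C$, using that finite-index subgroups are undistorted and that finitely generated subgroups of free groups are undistorted (quasiconvex), and then chain the bi-Lipschitz estimates with Proposition~\ref{noDist}; the only step left implicit is the routine coset bookkeeping needed to pass from $H\cap C$ back to all of $H$, which is standard. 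The trade-off: the paper's proof (once corrected) is a one-liner resting on an exact finite-index identification and on quoted freeness results, while yours is longer and imports undistortion of finitely generated subgroups of free groups, but it is self-contained on freeness and immune to precisely the misidentification that slipped into the paper's argument.
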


Finally, we consider the elements of $PSL_2(\mathbb{Z})$ as
piecewise linear maps and we give a characterization in terms of
the coordinates of their non differentiable points $(x_0,y_0)$,
$(x_1,y_1)$, $\ldots$, $(x_k,y_k)$. As the result is technical and
needs both notation and definitions, the reader is referred to
section 4 for the details.

\textbf{Structure of the paper.} This paper is structured in four
sections. The group $T$ is presented as the group of piecewise
$PSL_2(\mathbb{Z})$ homeomorphisms of the real projective line in
section two. The third section deals with the characterization of
the elements of $PSL_2(\mathbb{Z})$ as a subgroup of $T$, the
proof that $PSL_2(\mathbb{Z})$ is a non distorted subgroup and the
construction of a non distorted free group of rank two in $T$.
Finally, in section four we give a piecewise linear
characterization of the elements of $PSL_2(\mathbb{Z})$.

\subsection{Acknowledgements.}

The author wishes to thank Jos\'e Burillo, Louis Funar and Vlad
Sergiescu for their comments and useful discussions. This work was
supported by ``La Caixa'' and ``l'Ambassade de France en Espagne''
Postgraduated Fellowship.

\section{$PSL(2,\mathbb{Z})$ as a subgroup of Thompson's group $T$}

The projective special linear group $PSL_2(\mathbb{Z})$ is
isomorphic to the free product of the cyclic groups of order 2 and
3 (see \cite{arbres}, page 20 example 1.5.3), i.e. it can be given
by the following presentation
$$
\left<  a, b | a^2=b^3=1 \right>,
$$
where $a = \left(
  \begin{array}{cc}
  0 & -1 \\
  1 & 0
  \end{array}
  \right)$
and $ b=\left(
  \begin{array}{cc}
  0 & -1 \\
  1 & 1
  \end{array}
  \right).
$

In order to connect Thompson's group $T$ with the modular group we
need the Minkowsky question mark function (see \cite{salem} or
\cite{viader}). Let $\bar{\mathbb{Q}}=\mathbb{Q} \cup \{\infty\}$
and let $\mathbb{Q}_2$ denote the dyadic rational numbers of the
unit interval. Then, the Minkowski question mark function, $? :
\bar{\mathbb{Q}} \longrightarrow \mathbb{Q}_2$, is defined
recursively. The basic cases are
$$
?\left(\infty\right)=\frac{1}{2}, \quad ?\left(\frac{0}{1}\right)=
0, \text{ and} \quad ?\left(-\infty\right)= \frac{1}{2},
$$
where $\infty$ will be represented by the fraction $\frac{1}{0}$
and $-\infty$ by $\frac{-1}{0}$. Then, for each pair of reduced
fractions, $\frac{p}{q}$ and $\frac{r}{s}$, satisfying
$|ps-qr|=1$, one defines the Minkowski question mark of their
Farey mediant $\frac{p}{q} \oplus \frac{r}{s}:=\frac{p+r}{q+s}$ as
$$?\left(\frac{p}{q} \oplus
\frac{r}{s}\right) = \frac{1}{2} \left(?\left(\frac{p}{q}\right) +
?\left(\frac{r}{s}\right)\right).$$ The Minkowski question mark
function is clearly a bijection between $\bar{\mathbb{Q}}$ and
$\mathbb{Q}_2$. Thus, using the density of $\bar{\mathbb{Q}}$ in
the real projective line $\mathbb{R}P^1 = \mathbb{R}\cup
\{\infty\}$ and $\mathbb{Q}_2$ in $[0,1]$, respectively, the
Minkowski question
mark function can be extended to $? : \mathbb{R}P^1 \longrightarrow [0,1]$. \\
The pairs of reduced fractions, $\frac{p}{q}$ and $\frac{r}{s}$,
satisfying $|ps-qr|=1$ are called consecutive Farey numbers, and
if $\frac{p}{q}< \frac{r}{s}$, the interval
$[\frac{p}{q},\frac{r}{s}]$ is called a Farey interval. Every
rational number appears as the Farey mediant of a Farey interval.
See, for example, \cite{apostol} section 5.4 or \cite{HardyWright}
chapter 3 for more details on Farey fractions.

\begin{lemma}
  Let $h$ be an element of $PPSL_2(\mathbb{Z})$ which is $ 
  \left(
  \begin{array}{cc}
  a & b \\
  c & d
  \end{array}
  \right) \in PSL_2(\mathbb{Z})$ on the interval $[x_0, x_k]$ of the real projective line $\mathbb{R}P^1$.
  Let $x_0 < x_1 < \ldots < x_k$ be a partition of $[x_0,x_k]$ such that every interval of the partition and every interval
  of the image partition $h(x_0) < \ldots < h(x_k)$ is a Farey interval. Then,
  $$h(x_i \oplus x_{i+1}) = h(x_i) \oplus h(x_{i+1}),$$
  for all $0 \leq i < k$.\label{suma}
\end{lemma}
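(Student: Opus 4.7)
The plan is to exploit the fact that the Möbius action of $PSL_2(\mathbb{Z})$ on $\mathbb{R}P^1$ preserves the Farey structure via a single determinant identity. First I would write $x_i = p/q$ and $x_{i+1} = r/s$ as reduced fractions, using the conventions $\infty = 1/0$, $-\infty = -1/0$. Since $[x_i, x_{i+1}]$ is by hypothesis a Farey interval, $|ps - qr| = 1$, and by definition the Farey mediant is $x_i \oplus x_{i+1} = (p+r)/(q+s)$.

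Second, since $h$ acts as the Möbius transformation associated with $\begin{pmatrix} a & b \\ c & d \end{pmatrix}$ on $[x_0, x_k]$, a direct computation gives
$$h(x_i) = \frac{ap+bq}{cp+dq}, \qquad h(x_{i+1}) = \frac{ar+bs}{cr+ds}.$$
The key observation is that these expressions are automatically in reduced form: expanding
$$(ap+bq)(cr+ds) - (cp+dq)(ar+bs) = (ad-bc)(ps-qr) = \pm 1,$$
any common divisor of $ap+bq$ and $cp+dq$ must divide $\pm 1$, and the same argument applies to the pair $(ar+bs, cr+ds)$. In particular, the fractions are reduced and remain consecutive Farey numbers, consistent with the assumption that $[h(x_i), h(x_{i+1})]$ is a Farey interval, so the Farey mediant of $h(x_i)$ and $h(x_{i+1})$ is well defined.

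Third, one computes both sides of the claimed equality. On one hand,
$$h(x_i) \oplus h(x_{i+1}) = \frac{(ap+bq)+(ar+bs)}{(cp+dq)+(cr+ds)} = \frac{a(p+r)+b(q+s)}{c(p+r)+d(q+s)}.$$
On the other hand, applying $h$ directly to the mediant,
$$h(x_i \oplus x_{i+1}) = h\!\left(\frac{p+r}{q+s}\right) = \frac{a(p+r)+b(q+s)}{c(p+r)+d(q+s)},$$
and the two quantities are visibly identical.

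The main (and essentially only) subtlety is the reducedness issue in step two, which justifies that $h(x_i) \oplus h(x_{i+1})$ is the Farey mediant of two genuine consecutive Farey numbers rather than a mediant of unreduced representatives (which would give the wrong answer); this is immediate from the determinant identity above. A minor bookkeeping point is ensuring that when some $x_i$ equals $\infty = 1/0$, the fractional formulas remain meaningful, but the conventions introduced before the lemma make this automatic.
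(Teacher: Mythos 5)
Your proof is correct and follows essentially the same route as the paper: write the endpoints as reduced fractions, apply the matrix formulas, and observe that the mediant of the images equals the image of the mediant; the paper handles the $x_{i+1}=\frac{1}{0}$ case by the same explicit computation you indicate is automatic. Your added determinant check that the image fractions are reduced is a small extra justification the paper leaves implicit, but the argument is the same.
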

\begin{proof}
  Let $x_i = \frac{p}{q}$ and $x_{i+1} = \frac{r}{s}$ be real rational numbers (i.e. $q,s \neq 0$).
  Then $h(x_i) = \frac{ap + bq}{cp + dq}$ and $h(x_{i+1}) = \frac{ar + bs}{cr + ds}$. Hence,
  $$h\left(\frac{p}{q} \oplus \frac{r}{s}\right)= \frac{ap + ar + bq + bs}{cp + cr + dq +ds} =
  h\left(\frac{p}{q}\right) \oplus h\left(\frac{r}{s}\right).$$
  If $x_{i+1} = \frac{1}{0}$, then $h(x_{i+1}) = \frac{a}{c}$ and
  $$h\left(\frac{p}{q} \oplus \frac{1}{0}\right)= \frac{ap + a + bq}{cp + c + dq} =
  h\left(\frac{p}{q}\right) \oplus h\left(\frac{1}{0}\right).$$
  Analogously if $x_i = \frac{-1}{0}$.
\end{proof}

The following result, due to Imbert, Kontsevich and Sergiescu,
identifies the two groups encoutered above (see \cite{imbert},
theorem 1.1).
\begin{theorem} (\cite{imbert}, theorem 1.1)
  The group $PPSL_2(\mathbb{Z})$ is isomorphic to Thompson's group $T$. \label{imbert}
\end{theorem}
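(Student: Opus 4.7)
The plan is to exhibit an explicit isomorphism by conjugation with the Minkowski question mark function. Viewing $?$ as a homeomorphism $\mathbb{R}P^1 \to S^1 = [0,1]/(0\sim 1)$, I would define
\[
\Phi : PPSL_2(\mathbb{Z}) \longrightarrow \mathrm{Homeo}^+(S^1), \qquad \Phi(f) = {?} \circ f \circ {?}^{-1}.
\]
Since $?$ is a bijection, $\Phi$ is automatically an injective group homomorphism, and the content of the theorem reduces to the two inclusions $\Phi(PPSL_2(\mathbb{Z})) \subseteq T$ and $\Phi(PPSL_2(\mathbb{Z})) \supseteq T$.

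For the forward inclusion I would take $f \in PPSL_2(\mathbb{Z})$ and refine its partition of $\mathbb{R}P^1$ so that every interval of linearity $[x_i,x_{i+1}]$ is a Farey interval and every image $[f(x_i),f(x_{i+1})]$ is Farey too; Lemma \ref{suma} together with a Stern--Brocot subdivision argument guarantees that such a refinement exists. On each such piece the $?$-images are standard dyadic intervals, of lengths $2^{-m_i}$ and $2^{-n_i}$ respectively. Iterating Lemma \ref{suma} shows that $f$ sends every Farey sub-interval of $[x_i,x_{i+1}]$ to a Farey sub-interval of $[f(x_i),f(x_{i+1})]$, and applying $?$ together with its defining halving property (which sends a Farey mediant to the arithmetic midpoint) shows that $\Phi(f)$ sends every dyadic sub-interval of $[?(x_i),?(x_{i+1})]$ to a dyadic sub-interval of $[?(f(x_i)),?(f(x_{i+1}))]$ of the same relative depth. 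An induction on dyadic depth, combined with density of the dyadics and monotonicity of $\Phi(f)$, then forces $\Phi(f)$ to be affine on $[?(x_i),?(x_{i+1})]$ with slope $2^{m_i - n_i}$, so $\Phi(f) \in T$.

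For surjectivity I would start from $g \in T$ and refine its partition so that each interval of linearity is a standard dyadic interval $[j/2^m,(j+1)/2^m]$ and each image is standard dyadic too; the dyadic endpoint and power-of-two slope conditions make such a refinement possible. Pulling back by $?^{-1}$ yields partitions of $\mathbb{R}P^1$ whose pieces are Farey intervals at source and target. I would then exploit that $PSL_2(\mathbb{Z})$ acts simply transitively on ordered Farey intervals: given neighbours $p/q < r/s$ with $rq - ps = 1$, the matrix $\bigl(\begin{smallmatrix} r & p \\ s & q \end{smallmatrix}\bigr)$ sends $[0,\infty]$ to $[p/q,r/s]$, and the orientation-preserving stabilizer of $[0,\infty]$ in $PSL_2(\mathbb{Z})$ is trivial. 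This yields a unique $\gamma_i \in PSL_2(\mathbb{Z})$ sending the source Farey piece to the target Farey piece. The $\gamma_i$'s agree on shared endpoints by construction of the partitions, so they glue to a well-defined $f \in PPSL_2(\mathbb{Z})$, and applying the forward direction to $f$ yields $\Phi(f) = g$.

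The main obstacle is the linearity step in the forward inclusion: establishing that a single $PSL_2(\mathbb{Z})$ transformation, conjugated by $?$, becomes affine with power-of-two slope on the corresponding dyadic interval. This is the bridge between the arithmetic action of $PSL_2(\mathbb{Z})$ on Farey mediants and the dyadic affine structure defining Thompson's group $T$, and it rests entirely on combining Lemma \ref{suma} with the halving definition of $?$ on Farey mediants, propagated through all levels of dyadic subdivision.
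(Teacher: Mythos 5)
Your argument is correct in substance, but your surjectivity step follows a genuinely different route from the paper. The paper also conjugates by the Minkowski question mark function and uses Lemma \ref{suma} for the ``lands in $T$'' direction, but it treats surjectivity by hitting a known finite generating set of $T$: it computes $Inn_{?}(b)=C$ and $Inn_{?}(a)=CA$ explicitly (via the Farey partitions in the table), and then exhibits one explicit piecewise-$PSL_2(\mathbb{Z})$ element $d$ with $Inn_{?}(d)=B$, so that $A$, $B$, $C$ all lie in the image. You instead construct a preimage of an arbitrary $g\in T$ directly: refine to standard dyadic pieces, pull back by $?^{-1}$ to Farey intervals, and invoke the simple transitivity of $PSL_2(\mathbb{Z})$ on ordered Farey intervals (equivalently, on oriented edges of the Farey tessellation) to glue the unique M\"obius pieces. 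Your approach is more intrinsic and does not require knowing a generating set of $T$ or producing the explicit element $d$, at the cost of needing the transitivity fact and the refinement lemmas (rational intervals decompose into finitely many Farey intervals; elements of $T$ admit standard-dyadic-to-standard-dyadic partitions); the paper's approach is shorter and has the side benefit of producing the explicit tree pair diagrams for $a$ and $b$ that are used later. Two small points you should make explicit: the concluding equality $\Phi(f)=g$ needs the one-line remark that both maps are affine on each source piece with the same endpoint images, hence coincide; and in the transitivity step the column matrix $\bigl(\begin{smallmatrix} r & p \\ s & q \end{smallmatrix}\bigr)$ may have determinant $-1$, which you fix by changing the sign of one column (same projective endpoints), so the orientation bookkeeping deserves a sentence. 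Neither is a genuine gap.
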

\begin{proof}
  We claim that the homomorphism $Inn_{?}: PPSL_2(\mathbb{Z}) \rightarrow T$, given by
  $Inn_{?}(g)=? \circ g \, \circ \, ?^{-1}$ is an isomorphism.
  First, we consider the generating set $\{a,b\}$ of $PSL_2(\mathbb{Z})$ and calculate $Inn_{?}(a)$ and $Inn_{?}(b)$.
  By lemma \ref{suma}, it suffices to find Farey partitions of $\bar{\mathbb{Q}}$ whose images
  are also Farey partitions. Then, applying the Minkowsi question mark function
  we will obtain partitions of the unit interval characterizing finite binary trees with the same number
  of leaves, thus elements in $T$. The following table summarizes this procedure.

  \renewcommand {\tabularxcolumn }[1]{ >{\arraybackslash }m{#1}}
\begin{center}
  \begin{tabularx}{0.85\textwidth}{|c|c|c|X|}
  \hline
  Generator & Farey partitions & Piecewise linear map & Reduced tree pair diagram \\ \hline \hline
  $\left(
  \begin{array}{cc}
  0 & -1 \\
  1 & 0
  \end{array}
  \right)$
  &
  $\begin{array}{rcl}
  & & \\
  \left[\frac{0}{1},\frac{1}{0}\right] & \mapsto & \left[\frac{1}{0},\frac{0}{1}\right]\\ \\
  \left[\frac{1}{0},\frac{0}{1}\right] & \mapsto & \left[\frac{0}{1},\frac{1}{0}\right]\\ \\
  \end{array}$
  &
  $ \left\{
  \begin{array}{cc}
  &  \\
  \displaystyle x + \frac{1}{2}, & 0 \leq x \leq \frac{1}{2}\\ \\
  \displaystyle x - \frac{1}{2}, & \frac{1}{2} \leq x \leq 1. \\ \\
  \end{array}
  \right.
  $
  &
  \includegraphics[width=3.5cm]{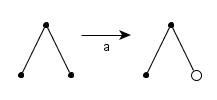} \\ \hline

  $ \left(
  \begin{array}{cc}
  0 & -1 \\
  1 & 1
  \end{array}
  \right)$
  &
  $\begin{array}{rcl}
  & & \\
  \left[\frac{0}{1},\frac{1}{0}\right] & \mapsto & \left[\frac{-1}{1},\frac{0}{1}\right] \\ \\
  \left[\frac{1}{0},\frac{-1}{1}\right] & \mapsto & \left[\frac{0}{1},\frac{1}{0}\right] \\ \\
  \left[\frac{-1}{1},\frac{0}{1}\right] & \mapsto & \left[\frac{1}{0},\frac{-1}{1}\right] \\ \\
  \end{array}$
  &
  $ \left\{
  \begin{array}{cc}
  &  \\
  \displaystyle \frac{x}{2} + \frac{3}{4}, & 0 \leq x \leq \frac{1}{2}\\ \\
  \displaystyle 2x - 1, & \frac{1}{2} \leq x \leq \frac{3}{4}\\ \\
  \displaystyle x - \frac{1}{4}, & \frac{3}{4} \leq x \leq 1. \\ \\
  \end{array}
  \right.$
  &
  \includegraphics[width=4cm]{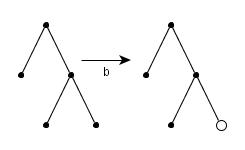} \\ \hline
  \end{tabularx}
\end{center}

  Thanks to this and lemma \ref{suma}, the map $Inn_{?}$ is well defined. Thus, $Inn_{?}$ is a homomorphism.
  The injectivity is also a consequence of lemma \ref{suma}. Let $A$, $B$ and $C$ be the three classical generators of
  Thompson's group $T$, i.e. the elements with the following reduced tree pair diagrams (see \cite{CFP} for
  details).

\begin{center}
\includegraphics[width=15cm]{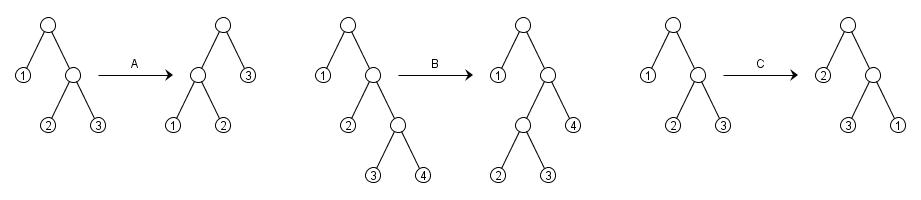}
\end{center}

  Note that $Inn_{?}(b) = C$ and $Inn_{?}(a) = CA$. In order to prove that
  $Inn_{?}$ is surjective, we have to find and element $d$ in $PPSL_2(\mathbb{Z})$ such that $Inn_{?}(d)=B$. This element is
  $$
  d(x) = \left\{
  \begin{array}{ll}
  \left(\begin{array}{cc} 1 & 0 \\ 0 & 1 \end{array}\right), & \text{if } 0 \leq x \leq \frac{1}{0},\\ \\
  \left(\begin{array}{cc} 1 & -1 \\ 0 & 1 \end{array}\right), & \text{if } \frac{-1}{0} \leq x \leq \frac{-1}{1},\\ \\
  \left(\begin{array}{cc} 3 & 1 \\ -1 & 0 \end{array}\right), & \text{if } \frac{-1}{1} \leq x \leq \frac{-1}{2},\\ \\
  \left(\begin{array}{cc} 1 & 0 \\ 1 & 1 \end{array}\right), & \text{if } \frac{-1}{2} \leq x \leq \frac{0}{1}.
  \end{array}\right.
  $$
\end{proof}

As a direct consequence of theorem \ref{imbert},
$PSL_2(\mathbb{Z})$ can be considered as a subgroup of Thompson's
group $T$. By abuse of notation, from now on we will denote the
generators in $PSL_2(\mathbb{Z})$ and its images in $T$ with the
same symbol, $a$ and $b$, and the subgroup
$Inn_{?}(PSL_2(\mathbb{Z}))$ will be simply $PSL_2(\mathbb{Z})$.

\remark  Let $A$, $B$ and $C$ be the three classical generators of
Thompson's group $T$ (see \cite{CFP} for details). It is easy to
see that $a = CA$ and $b = C$. Hence, $PSL_2(\mathbb{Z}) \subset
T$ is the subgroup generated by $A$ and $C$.

\section{Tree diagram characterization of $PSL_2(\mathbb{Z})$}

Recall that an ordered rooted binary tree is thin if all its
carets have one leave and one internal vertex, except for the last
caret. Furthermore, there exists a bijection between the set of
thin trees with $n$ leaves and $\{-1,1\}^{n-2}$, by associating a
weight $r_i \in\{-1,1\}$ to every internal vertex $v_i$, for $0
\leq i \leq n-3$, in the following way: $r_i=1$ if $v_i$ is the
left descendant of $v_{i-1}$ and $r_i=-1$ if $v_i$ is the right
descendant of $v_{i-1}$, where $v_{-1}$ denotes the root.

\remark A thin tree with $n$ leaves has exactly $n-2$ associated
weights and $n-1$ carets. Furthermore, if $l$ is the number of the
left leaf of the exposed caret, then $l-1$ is the cardinal of the
set $\{i \, : \, r_i=-1, 0 \leq i \leq k-1\}$.

Recall that we want to characterize the elements of
$PSL_2(\mathbb{Z})$ as a subgroup of Thompson's group $T$ in terms
of reduced tree pair diagrams.

\setcounter{theorem}{0}
\begin{theorem}
  Let $(T_1,\sigma,T_2)$ be the reduced tree pair diagram of an element $f$ of Thompson's group $T$.
  Then, $f$ belongs to the subgroup $PSL_2(\mathbb{Z})$ if and only if $(T_1,\sigma,T_2)$
  satisfies one of the following conditions:
  \begin{enumerate}
    \item the number of leaves of $T_1$ and $T_2$ is less than 4; or
    \item the trees $T_1$ and $T_2$ are thin with associated weights $r_0, \ldots, r_{k-1}$ and $s_0, \ldots,
    s_{k-1}$ respectively, which verify the equations:
    $$ \sum_{i=2}^{k-1}r_i s_{k+1-i}= 2 - k, $$
    and
    $$ l + \sigma(1) + \epsilon(s_0) \equiv  \frac{3 - s_1}{2} \quad ({\rm{mod}} \, k+2),
    $$
    where $l-1$ is the cardinal of the set $\{i \, : \, r_i=-1, 0 \leq i \leq k-1\}$, and
    $\epsilon(x) = \left\{ \begin{array}{cc}
    1 & \text{if } x = 1\\
    0 & \text{if } x = -1.
    \end{array} \right.$
  \end{enumerate}
\end{theorem}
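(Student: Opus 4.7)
The plan is to prove the two directions separately, handling the small-leaf case ($|T_i|<4$) by direct enumeration and the generic case (condition~(2)) by combining Proposition~\ref{treesWord} with a counting argument, exactly in the spirit of the remark following Proposition~\ref{treesWord}.

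For the ``if'' direction in the generic case, suppose $(T_1,\sigma,T_2)$ is reduced with $T_1,T_2$ thin on $k+2\ge 4$ leaves, with weight vectors $(r_i)$, $(s_i)$ and permutation $\sigma$ satisfying equations~\eqref{eq1} and~\eqref{eq2}. I would read off a candidate normal form $w(a,b)=a^{\epsilon_1}b^{\delta_1}a\cdots ab^{\delta_k}a^{\epsilon_2}$ by setting $\epsilon_1=\epsilon(r_0)$, $\delta_i=r_i$ for $1\le i\le k-1$, $\epsilon_2=\epsilon(s_0)$ and $\delta_{k+1-i}=-s_i$ for $1\le i\le k-1$. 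Proposition~\ref{treesWord} then asserts that the reduced diagram of $w$ is thin with exactly these weights and with cyclic permutation $\sigma'$ determined mod $k+2$ by formula~\eqref{eq3}; but \eqref{eq3} is just \eqref{eq2} solved for $\sigma(1)$, so $\sigma'=\sigma$. Uniqueness of reduced diagrams then gives $f=w\in PSL_2(\mathbb{Z})$. The small-leaf case is closed by listing: every reduced diagram on $1$, $2$ or $3$ leaves can be exhibited as a short word in $a,b$ (identity, $a$, and the finitely many $a^{\epsilon_1}b^{\delta_1}a^{\epsilon_2}$).

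For the ``only if'' direction I would argue by counting. Fix $k\ge 2$. By the free-product normal form theorem, the words $a^{\epsilon_1}b^{\delta_1}a\cdots ab^{\delta_k}a^{\epsilon_2}$ with $\epsilon_1,\epsilon_2\in\{0,1\}$ and $\delta_i\in\{-1,1\}$ represent $4\cdot 2^k$ pairwise distinct elements of $PSL_2(\mathbb{Z})$. On the diagram side, equation~\eqref{eq1} writes a sum of $k-2$ terms $\pm1$ as $-(k-2)$, which is only possible if every term equals $-1$, i.e.\ $s_{k+1-i}=-r_i$ for $i=2,\dots,k-1$. Thus $s_2,\dots,s_{k-1}$ are slaved to $r_2,\dots,r_{k-1}$, the genuine free parameters are $r_0,\dots,r_{k-1}$ together with $s_0,s_1$, yielding $2^{k+2}=4\cdot 2^k$ weight configurations, and equation~\eqref{eq2} then fixes $\sigma(1)\pmod{k+2}$, hence the cyclic permutation $\sigma$ itself. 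Since Proposition~\ref{treesWord} provides an injection from the $4\cdot 2^k$ normal forms into this set of valid diagrams (distinct elements of $T$ have distinct reduced diagrams), the equality of finite cardinalities upgrades it to a bijection, so every reduced diagram satisfying condition~(2) represents an element of $PSL_2(\mathbb{Z})$.

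The main obstacle I anticipate is verifying that the diagram produced by Proposition~\ref{treesWord} is genuinely reduced, since the entire injection step — and therefore the counting argument — rests on this; a subsidiary obstacle is the careful bookkeeping of the small-leaf enumeration to ensure no reduced diagram with at most three leaves escapes $PSL_2(\mathbb{Z})$. Once the extremal nature of equation~\eqref{eq1} is recognised, the count itself becomes essentially mechanical and the two sides visibly match.
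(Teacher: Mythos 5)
Your proposal is correct and follows the paper's strategy in all essentials: the small-diagram enumeration, Proposition \ref{treesWord} for elements given in normal form, and a counting argument resting on the observation that equation (\ref{eq1}) is extremal (each product $r_is_{k+1-i}$ must equal $-1$, so $s_2,\ldots,s_{k-1}$ are forced by the source tree), which is the paper's count of four admissible target trees and a unique $\sigma$ per thin source tree, merely stated globally as $2^{k+2}=4\cdot 2^{k}$ instead of per source tree. The one genuine difference is your first paragraph: reading the normal form off a diagram satisfying (\ref{eq1}) and (\ref{eq2}) (consistency of the two prescriptions for $\delta_2,\ldots,\delta_{k-1}$ being exactly the extremal consequence of (\ref{eq1})), observing that (\ref{eq3}) is (\ref{eq2}) solved for $\sigma(1)$, and invoking uniqueness of reduced tree pair diagrams proves the direction ``diagram satisfies the conditions $\Rightarrow f\in PSL_2(\mathbb{Z})$'' directly, which makes your counting paragraph logically redundant (it re-proves that same direction), whereas the paper obtains this direction only through the count; your direct argument is arguably cleaner, at the cost of nothing beyond what Proposition \ref{treesWord} already provides. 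One presentational caveat: your ``if''/``only if'' labels are crossed, and the converse direction ``$f\in PSL_2(\mathbb{Z})\Rightarrow$ its reduced diagram satisfies condition (1) or (2)'' appears only implicitly in your write-up --- it is precisely the ``furthermore'' clause of Proposition \ref{treesWord} for $k\ge 2$ together with the small-case table, and should be stated as such so that both implications of the theorem are explicitly accounted for.
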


In particular, there is a relation between reduced words on the
generators $a$ and $b$ of $PSL_2(\mathbb{Z})$ and reduced tree
pair diagrams representing them as elements of Thompson's group
$T$. We will denote the inverse of the generator $b$ by $b^{-1}$
or $\bar{b}$.

\setcounter{proposition}{0}
\begin{proposition}
  Let $w(a,b) = a^{\epsilon_1}b^{\delta_1}a b^{\delta_2}a \ldots a b^{\delta_k}a^{\epsilon_2}$
  be a reduced word on the generators $\{a,b\}$ of $PSL_2(\mathbb{Z})$, 
  where $\epsilon_1,\epsilon_2 \in \{0,1\}$ and $\delta_1,\ldots,\delta_k\in \{-1,1\}$. Assume that $k \geq 2$.
  Let $T_1$ be the thin tree given by the weights $r_0 = \epsilon^{-1}(\epsilon_1)$ and $r_i = \delta_i$ for $1 \leq
  i \leq k-1$, and let $T_2$ be the thin tree given by the weights $s_0 = \epsilon^{-1}(\epsilon_2)$ and $s_i = -\delta_{k+1-i}$ for
  $1 \leq i \leq k-1$. Let $\sigma$ be the cyclic permutation
  defined by
  $$\sigma(1) \equiv \frac{3-s_1}{2} - \epsilon(s_0) - l \quad ({\rm{mod}} \, k+2),$$
  where  $l$ and $\epsilon$ are defined as in theorem 1.
  Then, $(T_1, \sigma, T_2)$ is the reduced tree pair diagram for $w$.\\
  Furthermore, the weights $r_0, \ldots, r_{k-1}$ and $s_0,\ldots, s_{k-1}$
  satisfy equations (\ref{eq1}) and (\ref{eq2}) of theorem \ref{mainThm}. \label{treesWord}
\end{proposition}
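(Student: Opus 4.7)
The plan is to proceed by induction on $k\ge 2$. For the base case $k=2$ I would handle by direct computation the sixteen reduced words $a^{\epsilon_1}b^{\delta_1}ab^{\delta_2}a^{\epsilon_2}$ with $\epsilon_i\in\{0,1\}$ and $\delta_i\in\{-1,1\}$, iterating the multiplication of tree pair diagrams using the explicit diagrams of $a$ and $b$ read off the table in the proof of theorem \ref{imbert}. In each case the reduced product has thin source and target trees with four leaves, and reading off the weights $r_0,r_1,s_0,s_1$ together with $\sigma(1)$ confirms the stated formulas.

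For the inductive step, assume the result at parameter $k-1$ and factor a word of parameter $k$ as $w=w_1\cdot b^{\delta_k}\cdot a^{\epsilon_2}$, where $w_1=a^{\epsilon_1}b^{\delta_1}a\ldots ab^{\delta_{k-1}}a$ is a word of parameter $k-1$ with trailing $a$. The inductive hypothesis provides its reduced tree pair diagram $(T_1',\sigma',T_2')$ with thin trees on $k+1$ leaves of the prescribed shape. I would then successively compute the product of $(T_1',\sigma',T_2')$ with the tree pair diagrams of $b^{\delta_k}$ and of $a^{\epsilon_2}$ using the procedure recalled after definition 2 (find a common refinement of the target tree on the left and the source tree on the right, then compose the induced cyclic permutations). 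The tree pair diagrams of $b^{\pm 1}$ and of $a$ only interact with a small neighbourhood of the exposed caret of $T_2'$, so right-multiplication by $b^{\pm 1}$ inserts a new weight at the top of the thin chain in the target (pushing all existing weights one step down), while right-multiplication by $a$ simply toggles the leftmost weight $s_0$ between $1$ and $-1$. The accumulation of the shift over the induction explains the index reversal $s_i=-\delta_{k+1-i}$.

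The hardest point is tracking the value of $\sigma(1)$ through the two right-multiplications. Since the leaves of a thin tree are labelled cyclically, inserting a new caret and swapping two leaves each shift the labels by amounts depending on $l$, $s_0$ and $s_1$, and I would verify by a direct modular arithmetic argument that the shift produced by multiplication by $b^{\delta_k}a^{\epsilon_2}$ is exactly what is needed so that the congruence $\sigma(1)\equiv\frac{3-s_1}{2}-\epsilon(s_0)-l\pmod{k+2}$ continues to hold when $k$, $s_0$, $s_1$ and $l$ are all updated accordingly. Alongside this I would check reducedness: since $T_1$ and $T_2$ are thin, the only candidate for a cancellable pair is the pair of exposed carets, and the formula for $\sigma(1)$ guarantees that $\sigma$ does not carry one onto the other.

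Finally, the ``furthermore'' clause is immediate from the definitions. Substituting $r_i=\delta_i$ and, for $2\le i\le k-1$, $s_{k+1-i}=-\delta_i$, one gets $\sum_{i=2}^{k-1}r_is_{k+1-i}=-(k-2)=2-k$, which is equation (\ref{eq1}); and equation (\ref{eq2}) is merely a rearrangement of the defining congruence (\ref{eq3}) for $\sigma(1)$.
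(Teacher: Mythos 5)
Your overall strategy coincides with the paper's: an induction that appends generators on the right of the word, follows the tree pair diagram multiplication, and keeps track of $\sigma(1)$ by modular arithmetic, with an explicit base case at $k=2$; and your ``furthermore'' observation (equations (\ref{eq1}) and (\ref{eq2}) follow by direct substitution once the diagram is identified) is correct. The genuine gap is in the inductive step: you only describe what right-multiplication by $b^{\pm1}$ does to the \emph{target} tree, and you justify this by saying the diagrams of $b^{\pm1}$ and $a$ ``only interact with a small neighbourhood of the exposed caret of $T_2'$''. That localization is wrong, and it hides the crux of the argument. Since $w_1$ ends in $a$, the target tree $T_2'$ of $w_1$ has $s_0=1$, so the right child of its root is a leaf, whereas the source tree of $b^{\pm1}$ has a caret hanging from the right child of its root; the common expansion therefore forces a split of the \emph{last} leaf of $T_2'$ (the one adjacent to the root, far from its exposed caret), and, crucially, a split of the leaf $p$ of the \emph{source} tree $T_1'$ with $\sigma'(p)$ equal to that last leaf. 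Thus the source tree also gains a caret, and the heart of the inductive step is to prove that this caret lands precisely on the exposed caret of $T_1'$ and on the correct side, so that the new source tree is again thin and its newly created bottom weight equals $\delta_{k-1}$, the last exponent of the \emph{old} word (consistently with $r_i=\delta_i$ in the statement), not the newly appended $\delta_k$. This is exactly where the induction hypothesis in the form of congruence (\ref{eq3}) is used: from $\sigma'(1)\equiv\frac{3-s_1'}{2}-\epsilon(s_0')-l'$ one gets $p\equiv 1-\sigma'(1) \pmod{k+1}$, hence $p=l'$ or $p=l'+1$ according to the sign of $\delta_{k-1}$, which is the case analysis the paper carries out (its cases b1--b4). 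Your proposal never mentions any modification of the source tree, so it does not establish that $T_1$ is the thin tree with weights $r_i=\delta_i$ (nor even that it has $k+2$ leaves), and the induction does not close as written.

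The remaining ingredients of your sketch are sound: multiplication by $a$ needs no expansion and only swaps the two subtrees of the root of the target, toggling $s_0$ and shifting $\sigma(1)$ by one; the target-weight recursion you state (new weight inserted at the top of the chain, old weights pushed down one index) is the correct one; and your reducedness remark can be made rigorous, since a cancellation would force $\sigma(l)=m$ with $m-1=\#\{i:\,s_i=-1\}$, which together with the congruence for $\sigma(1)$ gives $m\equiv\frac{1-s_1}{2}-\epsilon(s_0)\pmod{k+2}$, incompatible with the values of $s_0,s_1$. But the source-tree analysis above must be added for the proof to be complete.
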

\begin{proof}
  We will proceed by induction. First, we consider the eight cases with $k=2$ and $\epsilon_2=0$,
  so that the reduced word is $w=a^{\epsilon_1}b^{\delta_1}ab^{\delta_2}$.

  \begin{tabular}{|c|c|c|c|}
    \hline
    \includegraphics[width=4cm]{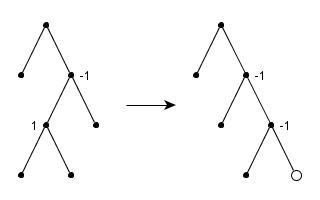} & \includegraphics[width=3.6cm]{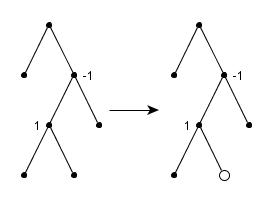} & \includegraphics[width=4cm]{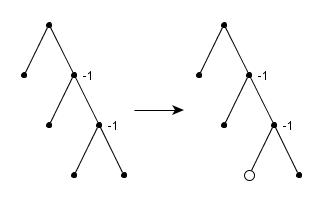} & \includegraphics[width=3.9cm]{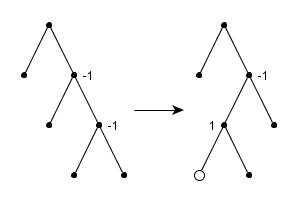}\\
    $bab$ & $ba\bar{b}$ & $\bar{b}ab$ & $\bar{b}a\bar{b}$ \\ \hline
    \includegraphics[width=4cm]{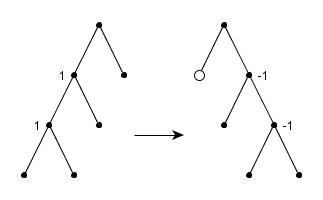} & \includegraphics[width=3.9cm]{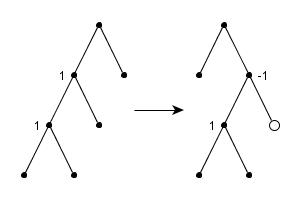} & \includegraphics[width=3.9cm]{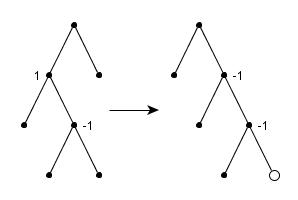} & \includegraphics[width=3.6cm]{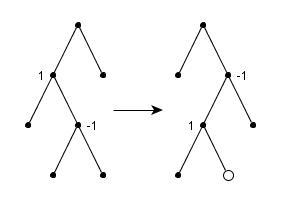}\\
    $abab$ & $aba\bar{b}$ & $a\bar{b}ab$ & $a\bar{b}a\bar{b}$\\
    \hline
  \end{tabular}

  The weights of their reduced tree pair diagrams verify the
  equations of the proposition
  $$
  \begin{array}{ccl}
  r_0 & = & \epsilon^{-1}(\epsilon_1)\\
  r_1 & = & \delta_1 \\
  s_0 & = & -1 \\
  s_1 & = & -\delta_2,
  \end{array}
  $$
  and
  $$\sigma(1) \equiv \frac{3-s_1}{2} - l \quad (\text{mod } 4).$$

  Now, let
  $w(a,b) = a^{\epsilon_1}b^{\delta_1}a b^{\delta_2}a \ldots a b^{\delta_k}a^{\epsilon_2}$
  be a reduced word verifying the induction hypothesis.
  We will see that all reduced words of type $w' = wy$, where $y \in \{a,b, \bar{b}\}$,
  also satisfy the equations of the proposition. There are several cases to be considered.

  a.- Suppose that $\epsilon_2 = 0$. Then, $w'=wa$. By the induction hypothesis, $wa$ has
  the reduced tree pair diagram of figure \ref{fig:TDW1}.
  \begin{figure}[H]
  \centering
  \includegraphics[width=8cm]{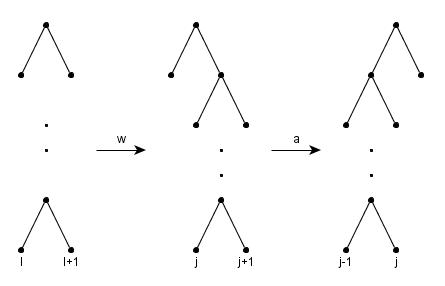}
  \caption{Tree pair diagrams multiplication for $wa$.}
  \label{fig:TDW1}
  \end{figure}
  The tree pair diagram for $wa$ in figure \ref{fig:TDW1} is reduced because the tree pair
  diagram for $w$ in the same figure was reduced. As a consequence, $r_i'=r_i$ for
  $0 \leq i \leq k-1$, $s_0'=1$, $s_i'=s_i$ for
  $1 \leq i \leq k-1$, $l' = l$ and $\sigma'(1) \equiv \sigma(1) -1 \, (\text{mod }k+2)$.
  Since $w'=wa$, we have $\epsilon_1' = \epsilon_1$, $\epsilon_2' = 1$ and
  $\delta_i' = \delta_i$ for $1 \leq i \leq k$. Thus, equation (\ref{eq1}) for $w'$
  follows from equation (\ref{eq1}) for $w$. Further,
  $l'+ \sigma'(1) + \epsilon(\epsilon_2')= l + \sigma(1) - 1+ 1$, which is congruent to
  $\frac{3 -  s_1'}{2}$ by the induction hypothesis, thereby proving that equation (\ref{eq2})
  is also verified.

  b.- Suppose that $\epsilon_2 = 1$. Then, by the induction hypothesis $w$ has the reduced
  tree pair diagram $(T_1,\sigma,T_2)$ in figure \ref{fig:TDW2}.
  \begin{figure}[H]
  \centering
  \includegraphics[width=5cm]{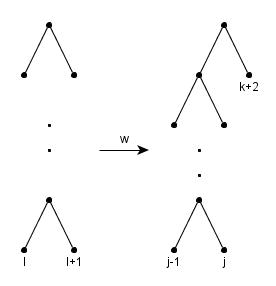}
  \caption{Reduced tree pair diagram for $w$.}
  \label{fig:TDW2}
  \end{figure}
  Note that in order to apply $b$ or $\bar{b}$ to $T_2$ we need to split the
  $(k+2)$-th vertex of $T_2$. That makes us split the $p$-th vertex of $T_1$ as well, where $p$ satisfies $\sigma(p) = k+2$.
  Since $\sigma(p) = \sigma(1) + p - 1$, then $\sigma(1) + p - 1 = k+2$, i.e.
  $p \equiv k + 3 - \sigma(1) \equiv 1 - \sigma(1) \, (\text{mod }k+2)$.
  We want to see that $p \in \{l, l+1\}$. By the induction hypothesis we have
  $\sigma(1) \equiv 1 - l$ if $\delta_k = 1$ and $\sigma(1) \equiv - l$ if $\delta_k = -1$.
  Thus, $p \equiv l$ if $\delta_k = 1$ and $p \equiv l+1$ if $\delta_k = -1$.

  Now, we must consider four subcases:

  \begin{tabular}{|l|l|}
    \hline
    & \\
    b1.- $\delta_k=1$, $w'=wb$ ($\delta_{k+1}=1$) & b2.- $\delta_k=1$, $w'=w\bar{b}$ ($\delta_{k+1}=-1$) \\
    \includegraphics[width=8cm]{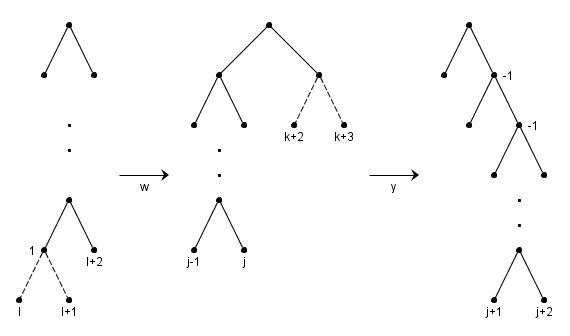} & \includegraphics[width=7.8cm]{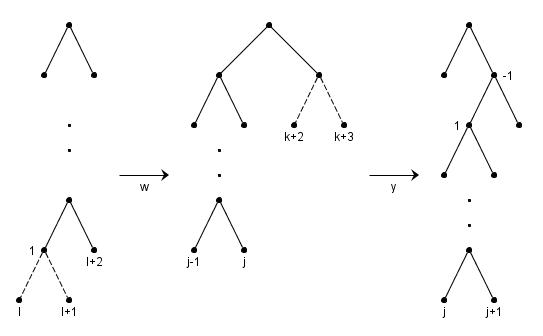} \\ \hline
    & \\
    b3.- $\delta_k=-1$, $w'=wb$ ($\delta_{k+1}=1$) & b4.- $\delta_k=-1$, $w'=w\bar{b}$ ($\delta_{k+1}=-1$) \\
    \includegraphics[width=8cm]{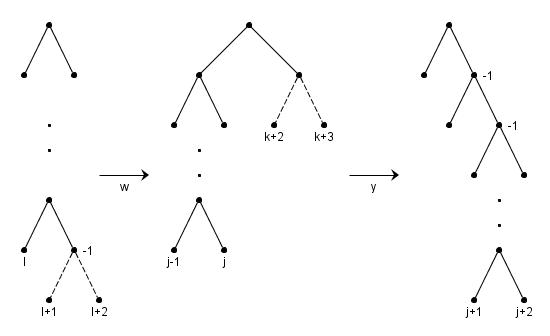} & \includegraphics[width=7.8cm]{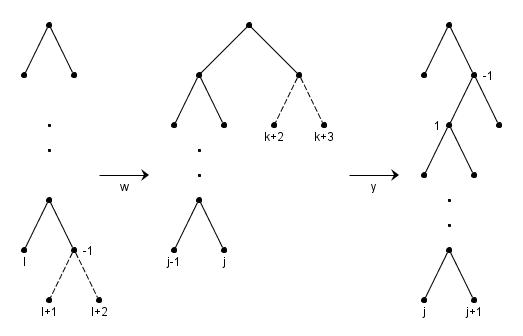} \\ \hline
  \end{tabular}

  It is clear that the tree pair diagrams obtained for $w'$ are reduced because the tree pair diagram for $w$ was
  reduced. Note also that all four subcases satisfy the following relations:
  $$
  \begin{array}{cccl}
  (r1) & k'   & = & k+1, \\
  (r2) & r_i' & = & r_i, \quad \text{for } 0 \leq i \leq k-1,\\
  (r3) & r_k' & = & \delta_k, \\
  (r4) & s_0' & = & -1, \\
  (r5) & s_1' & = & -\delta_{k+1}, \text{ and } \\
  (r6) & s_i' & = & s_{i-1}, \quad \text{for } 2 \leq i \leq k.\\
  \end{array}
  $$
  As a consequence of $(r1)$, $(r2)$ and $(r6)$ we obtain
  $$
  \sum_{i=2}^{k'-1}r_i's_{k'+1-i} = \sum_{i=2}^{k-1}r_i s_{k+1-i} + r_k's_2'.
  $$
  Thus, applying the induction hypothesis we get
  $$ \sum_{i=2}^{k'-1}r_i's_{k'+1-i} = 2 - k - r_k' \delta_k.$$
  Therefore, equation (\ref{eq1}) holds if and only if $r_k'\delta_k = 1$, and this follows from $(r3)$.

  Finally, we must verify that our four subcases satisfy equation (\ref{eq2}). Note that $(r4)$ implies
  $\epsilon(s_0')=0$, so the left-hand-side of the equation (\ref{eq2}) for $w'$ is equal to $l' + \sigma'(1)$.
  The reduced tree pair diagrams show that
  $$
  l' + \sigma'(1) = \left\{
  \begin{array}{cl}
  l + \sigma(1) + 2, & \text{in case b1} \\
  l + \sigma(1) + 1, & \text{in case b2} \\
  l + \sigma(1) + 3, & \text{in case b3} \\
  l + \sigma(1) + 2, & \text{in case b4.} \\
  \end{array}
  \right.
  $$
  On the other hand, since $w$ verifies equation (\ref{eq2}) we
  obtain
  $$
  l + \sigma(1) + 1 \equiv \left\{
  \begin{array}{ccl}
  1, & \text{if }\delta_k = -1 & \text{(cases b1 and b2)}\\
  2, & \text{if }\delta_k = 1 & \text{(cases b3 and b4)}
  \end{array}\right. \, (\text{mod } k+2).
  $$
  Furthermore, $1 \leq l \leq k+1$ and $1 \leq \sigma(1) \leq k+2$,
  so
  $$
  l + \sigma(1) + 1 = \left\{
  \begin{array}{ccl}
  k + 3, & \text{if }\delta_k = -1 & \text{(cases b1 and b2)}\\
  k + 4, & \text{if }\delta_k = 1 & \text{(cases b3 and b4)}.
  \end{array}\right.
  $$
  Hence, equation (\ref{eq2}) holds for $w'$.
\end{proof}

\begin{proof} (of theorem \ref{mainThm}).
First, we list all reduced tree pair diagrams with less than 4
leaves and their reduced words in the generators $a$ and $b$ of
$PSL_2(\mathbb{Z})$ (see the table below). Then, we focus on tree
pair diagrams with thin trees and more than 3 leaves. The `if'
part coincides with proposition \ref{treesWord}. The `only if'
part will be proved by counting the number of elements satisfying
the two equations and showing this is exactly the number of
elements that come from reduced words in $a$ and $b$.

\begin{center}
\begin{tabular}{|c|c|c|c|c|}
\hline
\includegraphics[width=2cm]{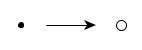} & \includegraphics[width=2.5cm]{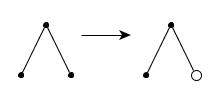} & \includegraphics[width=3cm]{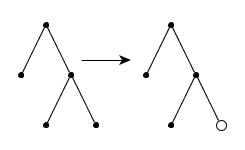} &
\includegraphics[width=3cm]{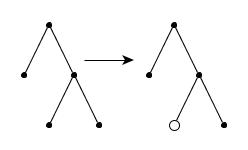} & \includegraphics[width=3cm]{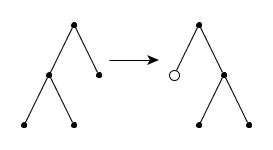} \\
1 & $a$ & $b$ & $\bar{b}$ & $ab$ \\ \hline
\includegraphics[width=3cm]{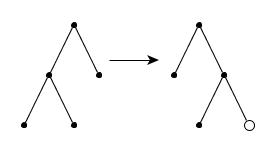} & \includegraphics[width=3cm]{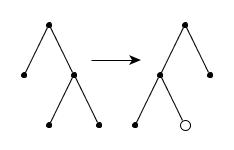} & \includegraphics[width=3cm]{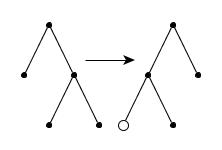} &
\includegraphics[width=3cm]{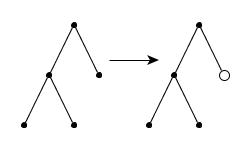} & \includegraphics[width=3cm]{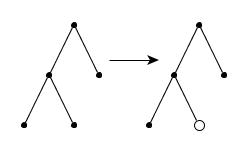} \\
$a\bar{b}$ & $ba$ & $\bar{b}a$ & $aba$ & $a\bar{b}a$ \\ \hline
\end{tabular}
\end{center}

Given a thin source tree, equation (\ref{eq1}) implies that we can
only have four different thin target trees, namely those with $s_0
\in \{0,1\}$ and $s_1 \in \{-1,1\}$. Moreover, given a pair of
compatible thin trees, $l$ is determined by the source tree and
all other terms of equation (\ref{eq2}) except $\sigma(1)$ are
determined by the target tree. Then, there exists a unique
permutation $\sigma$ satisfying equation (\ref{eq2}). Therefore,
given a thin tree $T_1$ there exist exactly four reduced tree pair
diagrams having $T_1$ as the source tree and satisfying equations
(\ref{eq1}) and (\ref{eq2}), which is exactly the same number of
reduced word in $a, \, b$ whose associated source tree is $T_1$.
\end{proof}

\remark From theorem \ref{mainThm} and proposition
\ref{treesWord}, we can read the reduced word in generators $a, b$
of any element of $PSL_2(\mathbb{Z})$ directly from its reduced
tree pair diagram.

Next, we wonder whether $PSL_2(\mathbb{Z})$ is distorted as a
subgroup of Thompson's group $T$.

\begin{proposition}
  The group $PSL_2(\mathbb{Z})$ is a non distorted subgroup of Thompson's group $T$. \label{noDist}
\end{proposition}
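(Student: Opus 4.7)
The plan is to invoke the non-distortion characterization of Remark~\ref{dist}: since $a$ and $b$ can be added to a finite generating set $Z$ of $T$, the upper bound $|h|_Z \leq |h|_Y$ is automatic, and it remains to produce constants $K, L$ with $|h|_Z \geq \tfrac{1}{K}|h|_Y - L$ for every $h \in PSL_2(\mathbb{Z})$. The strategy is to pin down both $|h|_Y$ and $|h|_T$ via the caret count of the reduced tree pair diagram of $h$: Proposition~\ref{treesWord} controls it from the normal form, while the metric estimates of Burillo, Cleary, Stein and Taback from \cite{BCST} control $|h|_T$ from below in terms of the same quantity.

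First I would read off $|h|_Y$ from the normal form. Since $PSL_2(\mathbb{Z}) \cong \mathbb{Z}/2\mathbb{Z} \ast \mathbb{Z}/3\mathbb{Z}$ is a free product of finite cyclic groups, the normal form $w = a^{\epsilon_1}b^{\delta_1}a b^{\delta_2}\cdots a b^{\delta_k}a^{\epsilon_2}$ is a shortest representative in the letters $\{a, b, b^{-1}\}$, so $|h|_Y = 2k - 1 + \epsilon_1 + \epsilon_2 \leq 2(k+1)$ for $k \geq 1$. By Proposition~\ref{treesWord}, whenever $k \geq 2$ the reduced tree pair diagram of $h$ consists of two thin trees with $k+2$ leaves, hence with exactly $k+1$ carets each.

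The key step is then to invoke \cite{BCST}: there exists a universal constant $c > 0$ such that $|f|_T \geq c \cdot N(f)$ for every $f \in T$, where $N(f)$ denotes the caret count of the reduced tree pair diagram of $f$. I would use this as a black box. Applied to our $h$ (with $k \geq 2$), it yields $|h|_T \geq c(k+1) \geq \tfrac{c}{2}|h|_Y$, which is the desired linear lower bound. The finitely many elements whose reduced tree pair diagram has fewer than $4$ leaves, namely the ten elements tabulated in the proof of Theorem~\ref{mainThm}, are absorbed into the additive constant $L$.

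The only nontrivial ingredient is the linear lower bound of \cite{BCST}; once it is available, the combinatorial characterization of $PSL_2(\mathbb{Z}) \subset T$ furnished by Theorem~\ref{mainThm} and Proposition~\ref{treesWord} reduces the non-distortion to the one-line comparison $|h|_Y \leq 2(k+1) \leq (2/c)|h|_T$. I do not expect any further obstacle beyond correctly quoting the BCST estimate.
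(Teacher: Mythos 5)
Your proposal is correct and follows essentially the same route as the paper: both use Proposition~\ref{treesWord}/Theorem~\ref{mainThm} to identify the caret number of the reduced diagram of a normal-form word as $k+1$, then quote the Burillo--Cleary--Stein--Taback caret estimate (Theorem 5.1 of \cite{BCST}, transferred to the enlarged generating set via $a=CA$, $b=C$) to compare $|h|_{a,b}\approx 2k$ with the word length in $T$. The only cosmetic difference is that you dispose of the upper bound by the trivial inclusion $Y\subseteq Z$ and of the finitely many short elements by absorbing them into the additive constant, whereas the paper carries explicit constants $K'=4K$, $L=3K$ through both inequalities.
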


For the proof, we will need the following theorem due to Burillo,
Cleary, Stein and Taback.

\begin{theorem} (\cite{BCST}, theorem 5.1)
  Let $N(w)$ be the number of carets of the reduced tree pair diagram representing
  $w \in T$, and let $|w|_{A,B,C}$ denote the word length of $w$ in the classical
  generating set $\{A,B,C\}$. Then, there exists a positive constant $K$ such that
  $$\frac{N(w)}{K} \leq |w|_{A,B,C} \leq K N(w).$$ \label{thmBCST}
\end{theorem}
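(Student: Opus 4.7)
The plan is to prove the two inequalities separately. The left-hand inequality, $N(w)/K \leq |w|_{A,B,C}$, is the easier direction and follows from a Lipschitz estimate on the multiplication procedure. The right-hand inequality, $|w|_{A,B,C} \leq K N(w)$, is the harder one and requires an explicit construction; I expect the bookkeeping for the cyclic permutation to be the main obstacle.

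For the lower bound, I would show that right-multiplication by any generator $g \in \{A^{\pm 1}, B^{\pm 1}, C^{\pm 1}\}$ changes the number of carets of the reduced tree pair diagram by at most a uniform constant $M$. Each of these six generators has a fixed reduced tree pair diagram with at most three carets. When one multiplies the reduced tree pair diagram $(T_1,\sigma,T_2)$ of $w$ by such a $g$, the algorithm from the introduction passes to a common expansion of $T_2$ with the source tree of $g$; since the source tree of $g$ is bounded, this common expansion can be taken to add at most a constant number of carets to both $T_1$ and $T_2$. The subsequent reduction can only decrease the caret count. Hence $N(wg) \leq N(w) + M$ for every generator $g$, and iterating from $N(1)=0$ yields $N(w) \leq M\,|w|_{A,B,C}$.

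For the upper bound, I would exhibit, for each element represented by a reduced tree pair diagram $(T_1,\sigma,T_2)$ with $n$ carets, a word of length $O(n)$ in $\{A,B,C\}$ representing it. I would split the construction into two stages. First, handle the $F$-part: the subgroup $F = \langle A, B \rangle$ is Thompson's group $F$, and any element of $F$ whose reduced tree pair diagram has $n$ carets admits a normal form, in the infinite generating set $\{x_0, x_1, \ldots\}$, whose total index sum is $O(n)$. Expressing each $x_i$ as $A^{-(i-1)} B A^{i-1}$ and telescoping adjacent factors produces a word in $\{A,B\}$ of length $O(n)$. Second, encode the cyclic permutation $\sigma$ by first applying the $F$-element that builds the correct source and target trees, and then applying a word in $C$ (conjugated by $F$-elements) that realises the desired cyclic shift on the $n+1$ leaves.

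The main obstacle is the second stage: ensuring that the cyclic permutation $\sigma$, which may have order as large as $n+1$, is realised by a word of length $O(n)$ rather than $O(n^2)$. The key input is that $C$ already implements a particular cyclic rotation of leaves, and that by conjugating $C$ by suitable $F$-elements (each of cost linear in $n$) one can compose at most a bounded number of such rotations to obtain any prescribed $\sigma$; alternatively, one uses an inductive descent along the tree, at each level paying only a constant cost to align the next block of leaves, so that the total contribution telescopes to $O(n)$. Combining the two stages yields a word of length $O(n)$ representing $w$, giving $|w|_{A,B,C} \leq K\,N(w)$ and completing the equivalence.
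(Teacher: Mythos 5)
First, note that the paper does not prove this statement at all: it is quoted verbatim from \cite{BCST} (Theorem 5.1) and used as a black box, so there is no internal proof to compare against. Judging your attempt against the actual argument in \cite{BCST}: your lower bound is correct and is the standard one. Right multiplication by any of $A^{\pm1},B^{\pm1},C^{\pm1}$ passes to a common expansion of the target tree with a tree having at most a bounded number of carets, the minimal common expansion adds at most that many carets, and reduction only removes carets; hence $N(wg)\leq N(w)+M$ and $N(w)\leq M|w|_{A,B,C}$. Your first stage of the upper bound (the $F$-part) is also sound: the normal form in the $x_i$'s of an element of $F$ with $n$ carets, rewritten via $x_i=A^{-(i-1)}BA^{i-1}$ with telescoping of the conjugators along the monotone index sequence, gives a word of length $O(n)$ in $\{A,B\}$; this is the easy direction of the metric estimate for $F$ and is a legitimate input.

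The genuine gap is exactly the step you flag as ``the main obstacle'' and then do not close. After factoring $w=(T_1,\sigma,T_2)$ as $(T_1,\mathrm{id},T_2)\cdot(T_2,\sigma,T_2)$, everything reduces to bounding the word length of the purely cyclic piece $(T_2,\sigma,T_2)$, equivalently (after conjugating by an $F$-element of cost $O(n)$ that carries $T_2$ to the right vine $R_n$) of the rotation $C_n^k=(R_n,\sigma_k,R_n)$, uniformly in $k$. Writing it as the $k$-th power of the one-step rotation gives only $O(nk)=O(n^2)$, and neither of your two proposed fixes is a proof: a product of a \emph{bounded} number of $F$-conjugates of $C$ consists of order-$3$ elements whose composition must be shown, by an explicit construction, to equal a prescribed rotation of order up to $n+1$; and the ``inductive descent paying constant cost per level'' is not defined for a cyclic (rather than order-preserving) leaf permutation. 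The statement that does the work --- and that you would need to prove --- is that $C_n^k$ can be written as $f\,C^{\pm1}g$ with $f,g\in F$ having $O(n)$ carets: one chooses $f\in F$ with $O(n)$ carets sending $C(0)=3/4$ to $C_n^k(0)=1-2^{-k}$, observes that $(fC)^{-1}C_n^k$ fixes $0$ and therefore lies in $F$ with $O(n)$ carets, and then applies the $F$-estimate from your first stage. Without this (or an equivalent) lemma, the upper bound $|w|_{A,B,C}\leq K\,N(w)$ is asserted rather than proved.
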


\begin{proof} (of proposition \ref{noDist})
Let $w=a^{\epsilon_1}b^{\delta_1}a \ldots a
b^{\delta_k}a^{\epsilon_2}$ be a reduced word in the generators
$a$ and $b$ of $PSL_2(\mathbb{Z})$, where $\delta_i \in \{-1,1\}$
and $\epsilon_j \in \{0,1\}$. We want to find constants $K'>0$ and
$L$ such that
$\frac{|w|_{a,b}}{K'}-L \leq |w|_{A,B,C,a,b} \leq K'|w|_{a,b}+L$ (see remark \ref{dist}).\\
Let $N(w)$ be the number of carets of the reduced tree pair
diagram representing $w \in T$. Recall that $a = CA$ and $b= C$.
Thus,
$$
\frac{|w|_{A,B,C}}{2} \leq |w|_{A,B,C,a,b} \leq |w|_{A,B,C}.
$$
Then, applying theorem \ref{thmBCST} we obtain \beq
\frac{N(w)}{2K} \leq |w|_{A,B,C,a,b} \leq 2K N(w). \label{lenghtT}
\enq On the other hand,
$$|w|_{a,b}= |a^{\epsilon_1}b^{\delta_1}a \ldots a b^{\delta_k}a^{\epsilon_2}|_{a,b}= 2k - 1 + \epsilon_1 + \epsilon_2.$$
From proposition \ref{treesWord} and theorem \ref{mainThm}, the
reduced tree pair diagram of $w$ has exactly $k+2$ leaves and
$k+1$ carets. Thus,
$$ 2 N(w) - 3 \leq |w|_{a,b} \leq 2 N(w) - 1,$$
which implies that \beq \frac{|w|_{a,b}+1}{2} \leq N(w) \leq
\frac{|w|_{a,b}+3}{2}. \label{N} \enq Therefore, from the
equations (\ref{lenghtT}) and (\ref{N}) we obtain the inequalities
$$
\frac{|w|_{a,b}+1}{4K} \leq |w|_{A,B,C,a,b} \leq K(|w|_{a,b} + 3),
$$
which yield
$$
\frac{|w|_{a,b}}{K'} - L \leq |w|_{A,B,C,a,b} \leq K'|w|_{a,b} +
L,
$$
where $K' = 4K$ and $L = 3K$.
\end{proof}

\setcounter{corollary}{0}
\begin{corollary}
  There exists a non distorted subgroup of Thompson's group $T$ isomorphic to
  the free non abelian group of rank 2.
\end{corollary}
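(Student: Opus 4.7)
The plan is to exhibit an explicit finite-index subgroup of $PSL_2(\mathbb{Z})$ that is free of rank $2$, and then to transport the non-distortion of $PSL_2(\mathbb{Z})$ in $T$ down to this subgroup via the standard fact that finite-index subgroups are undistorted.

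First I would recall the classical fact (found in Humphreys, as cited in the introduction) that the commutator subgroup $F = [PSL_2(\mathbb{Z}),PSL_2(\mathbb{Z})]$ is isomorphic to the free non abelian group of rank $2$, and that it has finite index in $PSL_2(\mathbb{Z})$. In fact the abelianization of $PSL_2(\mathbb{Z}) \cong \mathbb{Z}/2 \ast \mathbb{Z}/3$ is $\mathbb{Z}/2 \times \mathbb{Z}/3 \cong \mathbb{Z}/6$, so the index of $F$ is exactly $6$. This provides the candidate subgroup.

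Next I would invoke the general principle that if $H \leq G$ has finite index then $H$ is non-distorted in $G$: given finite generating sets $Y$ of $H$ and $Z \supseteq Y$ of $G$, one fixes a finite transversal $\{t_1,\dots,t_m\}$ for $H$ in $G$ and writes any element $g \in G$ as $g = h t_i$ with $|h|_Y \leq C|g|_Z + C'$ for constants depending only on the transversal and generators; restricting to $g \in H$ gives a linear bound of $|g|_Y$ in terms of $|g|_Z$, while the reverse bound $|g|_Z \leq |g|_Y$ is automatic since $Y \subseteq Z$. Hence $F$ is non-distorted in $PSL_2(\mathbb{Z})$.

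Finally I would combine this with Proposition \ref{noDist}: by remark \ref{dist}, $F$ non-distorted in $PSL_2(\mathbb{Z})$ and $PSL_2(\mathbb{Z})$ non-distorted in $T$ give constants $K_1, L_1, K_2, L_2$ such that for $h \in F$,
\[
\tfrac{1}{K_1}|h|_Y - L_1 \leq |h|_{a,b} \leq K_1|h|_Y + L_1,
\qquad
\tfrac{1}{K_2}|h|_{a,b} - L_2 \leq |h|_{A,B,C} \leq K_2|h|_{a,b} + L_2,
\]
and composing these linear inequalities produces constants $K, L$ making
\[
\tfrac{1}{K}|h|_Y - L \leq |h|_{A,B,C} \leq K|h|_Y + L
\]
hold for every $h \in F$, so $F \cong F_2$ is non-distorted in $T$. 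The main obstacle is nothing deep but rather the bookkeeping that non-distortion is transitive under inclusions of finitely generated subgroups; once that (and the Humphreys fact about the commutator subgroup) is in hand, the corollary is immediate.
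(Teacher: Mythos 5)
Your proposal is correct and follows essentially the same route as the paper: the paper takes $H = \left\langle abab, a\bar{b}a\bar{b}\right\rangle$, identifies it as the kernel of the map $PSL_2(\mathbb{Z}) \to \mathbb{Z}/2\mathbb{Z} \times \mathbb{Z}/3\mathbb{Z}$ (i.e.\ exactly your commutator subgroup), and then uses finite index $\Rightarrow$ non-distortion in $PSL_2(\mathbb{Z})$ together with Proposition \ref{noDist} and transitivity, just as you do. The only difference is cosmetic: the paper names explicit free generators, while you fill in the standard transversal and composition-of-inequalities details more explicitly.
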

\begin{proof}
  Let $H$ be the subgroup of $PSL_2(\mathbb{Z})$ generated by $g=abab$ and
  $h=a\bar{b}a\bar{b}$, which
  is isomorphic to the free non abelian group of rank 2 (see
  \cite{PdelaHarpe}, page 26 or \cite{Lyndon-Shupp} proposition
  III.12.3). The subgroup $H$ is the kernel of the morphism
  $$
  \begin{array}{ccc}
    PSL_2(\mathbb{Z}) &
    \longrightarrow & \mathbb{Z}/2\mathbb{Z} \times
    \mathbb{Z}/3\mathbb{Z}\\
    a & \longmapsto & (1,0)\\
    b & \longmapsto & (0,1),
  \end{array}$$
  so $H$ is of finite index in $PSL_2(Z)$. Thus, $H$ is non
  distorted in $PSL_2(\mathbb{Z})$ and using the proposition \ref{noDist}
  $H$ is non distorted in $T$.
\end{proof}

This is related to recent results of Calegari and Freedman
\cite{calegariFreedman}, who analyzed distorted cyclic subgroups
of more general homeomorphisms groups.

\section{Piecewise linear characterization of $PSL_2(\mathbb{Z})$}

In this section we will characterize the elements of
$PSL_2(\mathbb{Z})$ as piecewise linear maps of the unit interval
with identified endpoints. Note that a piecewise linear map of
$[0,1]$ with identified endpoints $f$ can be given by the
coordinates of its non differentiable points $(x_0,y_0)$,
$(x_1,y_1)$, $\ldots$, $(x_k,y_k)$. Furthermore, $x_0=0$, $x_k=1$
and there exists $i \in \{0, \ldots, k\}$ such that $y_i=0$. Let
$\Delta_j(x)$ denote the difference $x_j-x_{j-1}$ for $1 \leq j
\leq k$, and $\Delta_j(y)$ denote $y_j-y_{j-1}$. Thus, we can
represent the map $f$ by the sequence
$$S(f)=\left\{\frac{\Delta_1(y)}{\Delta_1(x)},
\frac{\Delta_2(y)}{\Delta_2(x)}, \ldots,
\frac{\Delta_i(y)}{\Delta_i(x)}, \frac{\circ
\Delta_{i+1}(y)}{\Delta_{i+1}(x)}, \ldots,
\frac{\Delta_k(y)}{\Delta_k(x)} \right\},$$ since, for $0 \leq
j\leq k$ we have
$$x_j =\sum_{l=1}^j \Delta_l \quad \text{and} \quad y_j
=\sum_{l=i+1}^{i+1+j}\Delta_{l(\text{mod } k)}.$$

Note that the mark $\circ$ on the sequence $S(f)$ denotes the
cyclical permutation which prevents us to have $y_0=0$. We will
denote by $\Delta_x(f)$ the sequence $\Delta_1(x)$, $\Delta_2(x)$,
$\ldots$, $\Delta_k(x)$, by $\Delta_y(f)$ the sequence
$\Delta_1(y)$, $\Delta_2(y)$, $\ldots$, $\Delta_k(y)$ and by
$\Delta_y^{\circ}(f)$ the sequence $\Delta_{i+1}(y)$,
$\Delta_{i+2}(y)$, $\ldots$, $\Delta_k(y)$, $\Delta_1(y)$,
$\ldots$, $\Delta_i(y)$. We will denote by $\Delta(f)$ either
$\Delta_x(f)$, $\Delta_y(f)$ or $\Delta_y^{\circ}(f)$.

\remark A piecewise linear map $f$ is an element of Thompson group
$T$ if and only if for all $j \in \{1, \ldots,k\}$, $x_j$, $y_j$
are dyadic rational numbers and the fraction
$\frac{\Delta_j(y)}{\Delta_j(x)}$ is a power of two.

\begin{definition} Let $\sigma$ be a permutation of the set $\{1,
\ldots, k\}$. Then, $\sigma$ is $k-${\rm{extremal}} if:
\begin{enumerate}
\item $\sigma(1)=1$ or $\sigma(1)=k$; \item for $2 \leq i \leq
k-2$, $\sigma(i)$ is the maximum or the minimum of the set $\{1,
\ldots, k \}\setminus \bigcup_{j=1}^{i-1}\sigma(j)$; \item
$\sigma(k-1)$ is the minimum and $\sigma(k)$ is the maximum of the
pair $\{1, \ldots, k \}\setminus \bigcup_{j=1}^{k-2}\sigma(j)$.
\end{enumerate}
\end{definition}

\textbf{Example:} The permutation $\sigma = (1,2,3,4,5) \mapsto
(5,1,2,3,4)$ is $5-$extremal.

\begin{definition} The sequence $\Delta$ is
$k-${\rm{thin}} if there exists $\sigma$ which is $k-$extremal
such that:
\begin{description}
  \item[(t1)] $\Delta_{\sigma^{-1}(i)}=2^{-i}$, for $1 \leq i \leq k-1$, and
  \item[(t2)] $\Delta_{\sigma^{-1}(k)}=2^{-k+1}$.
\end{description}
\end{definition}

\begin{lemma}
There exists a canonical bijection between $k-$thin sequences and
thin trees with $k$ leaves. \label{lemma}
\end{lemma}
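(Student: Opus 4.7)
The plan is to establish the bijection combinatorially, using the common parametrization of both sides by the set $\{-1,1\}^{k-2}$. Thin trees with $k$ leaves are parametrized by their weight vector $(r_0,\ldots,r_{k-3})$. A $k$-extremal permutation $\sigma$ is determined by the $k-2$ binary choices at steps $1,\ldots,k-2$ (``take the minimum'' or ``take the maximum'' of the remaining positions), while the final two steps are forced by $\sigma(k-1)<\sigma(k)$; hence $k$-extremal permutations, and with them $k$-thin sequences, also form a set of cardinality $2^{k-2}$. The bijection $\Phi$ I will define matches coordinates: $r_i=-1$ corresponds to ``minimum'' at step $i+1$, and $r_i=1$ corresponds to ``maximum''. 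Its inverse is read off in exactly the same way from a $k$-extremal $\sigma$.

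I would then prove that $\Phi$ realises the intended geometric correspondence between leaf partitions of $[0,1]$ and $k$-thin sequences, by induction on $k$. The base case $k=2$ is immediate, since both sides have a single element. For the inductive step, I use the canonical decomposition of a thin tree with $k$ leaves as a root caret whose ``leaf child'' (prescribed by $r_0$) contributes a depth-$1$ leaf of width $2^{-1}$, and whose ``vertex child'' is the root of a thin subtree $T'$ with $k-1$ leaves and weights $(r_1,\ldots,r_{k-3})$. Viewed as a partition of $[0,1]$, the new leaf occupies an extreme subinterval of length $2^{-1}$, and the partition of $T'$ fills the complementary half-interval with all widths halved. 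On the permutation side, the first step of $\sigma$ selects an extreme of $\{1,\ldots,k\}$ according to $r_0$, and the remaining $k-1$ steps form, after an order-preserving relabelling of the remaining positions to $\{1,\ldots,k-1\}$, a $(k-1)$-extremal permutation associated to $(r_1,\ldots,r_{k-3})$; by the induction hypothesis this inner permutation corresponds to $T'$.

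To close the induction one matches the widths: a value $2^{-(i-1)}$ appearing in the standalone $(k-1)$-thin sequence for $T'$ becomes $2^{-i}$ after the factor $2^{-1}$ rescaling, which is exactly the shift of indices required by the defining relation. The boundary value $2^{-(k-1)}$ at the final step accounts for the exposed caret of $T$, whose two leaves of depth $k-1$ occupy the remaining pair of positions in the canonical order $\sigma(k-1)<\sigma(k)$. The main technical obstacle is the bookkeeping of this rescale-and-relabel step: verifying that deleting the first step of a $k$-extremal permutation and relabelling the remaining $k-1$ positions order-preservingly produces a genuine $(k-1)$-extremal permutation, and that this operation intertwines the width formulas on both levels. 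Once that is in hand, $\Phi$ and its inverse are manifestly mutual inverses, and the lemma follows.
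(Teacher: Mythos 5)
Your proposal is correct and takes essentially the same route as the paper: the paper's assignment of the weight $r_i$ according to whether the entry $2^{-(i+1)}$ lies in $\Delta^{(-1)}$ or $\Delta^{(1)}$, i.e.\ to the left or to the right of the two central entries $2^{-k+1}$, is exactly your ``minimum versus maximum at step $i+1$'' choice, and the paper likewise concludes by noting that both sides have cardinality $2^{k-2}$. Your additional induction verifying that the bijection realises the leaf-width partition of the thin tree is extra detail which the paper leaves implicit.
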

\begin{proof}
  Let $\Delta$ be a $k-$thin sequence. Note that $\Delta=\left\{\Delta^{(-1)}, 2^{-k+1} , 2^{-k+1}
  , \Delta^{(1)}\right\}$, where $\Delta^{(-1)}$ and $\Delta^{(1)}$ are subsequences of $\Delta$.
  For $0 \leq i \leq k-3$ we define $r_i=j$ if $\Delta_{\sigma^{-1}(i+1)} \in \Delta^{(j)}$.
  Then, we associate the thin tree with weights $r_0, \ldots, r_{k-3}$ to $\Delta$.
  Finally, we remark that the number of thin trees with $k$ leaves and the number
  of $k-$thin permutations are both equal to $2^{k-2}$.
\end{proof}

\textbf{Example:} A $5-$thin sequence and its associated thin tree
are shown below.

\vspace{-1cm}
\begin{tabular}{m{3cm}m{1cm}}
$$\begin{array}{lcl}
  \Delta & = & \left\{2^{-2},2^{-4},2^{-4},2^{-3},2^{-1}\right\}.\\
  & & \\
  \sigma & = & (1,2,3,4,5)\mapsto(5,1,4,2,3).\\
  & & \\
  \Delta^{(-1)} & = & \left\{2^{-2}\right\} \quad \Rightarrow \quad r_1=-1.\\
  & & \\
  \Delta^{(1)} & = & \left\{2^{-3},2^{-1}\right\} \quad \Rightarrow \quad r_2=r_0=1.
  \end{array}$$
&
\begin{figure}[H]
\begin{center}
\includegraphics[width=3cm]{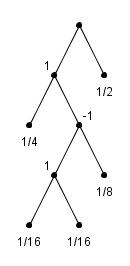}
\end{center}
\end{figure}
\end{tabular}

\begin{definition} The sequence $S(f)$ representing a piecewise
linear map $f$ is $k-${\rm{good}} if
\begin{description}
  \item[(g1)] the sequence $\Delta_x(f)$ is $k-$thin,
  \item[(g2)] the sequence $\Delta_y^{\circ}(f)$ is $k-$thin, and
  \item[(g3)] for $3 \leq j \leq k-2$, the fraction $\frac{2^{-k-1+j}}{2^{-j}}$ appears in $S(f)$.
\end{description}
\end{definition}

\textbf{Example:} The sequence $\displaystyle S(f)=
\left\{\frac{2^{-5}}{2^{-3}},\frac{2^{-3}}{2^{-5}},\frac{2^{-2}}{2^{-6}},
\frac{\circ
2^{-1}}{2^{-6}},\frac{2^{-4}}{2^{-4}},\frac{2^{-6}}{2^{-2}},\frac{2^{-6}}{2^{-1}}\right\}$
is $7-$good.

\begin{theorem}
An element $f$ of $T$ belongs to $PSL_2(\mathbb{Z})$ if and only
if the sequence $S(f)$ is $k-$good.
\end{theorem}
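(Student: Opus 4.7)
The plan is to reduce this theorem to Theorem \ref{mainThm} through the dictionary between reduced tree pair diagrams and piecewise linear data, using Lemma \ref{lemma} as the bridge between thin trees and $k$-thin sequences. Given a reduced tree pair diagram $(T_1,\sigma,T_2)$ whose trees have $k$ leaves, the $j$-th leaf of $T_1$ sits above a standard dyadic interval of width $2^{-d_1(j)}$, where $d_1(j)$ is its depth, and similarly for $T_2$; the map $f$ sends the $j$-th source interval affinely onto the $\sigma(j)$-th target interval, so $\Delta_x(f)_j=2^{-d_1(j)}$ and $\Delta_y(f)_{\sigma(j)}=2^{-d_2(\sigma(j))}$. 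The cyclic shift in $\Delta_y^{\circ}(f)$ is precisely what absorbs the rotation of $S^1$ produced by $\sigma$ being cyclic.

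With this dictionary, conditions (g1) and (g2) follow directly from Lemma \ref{lemma}: a thin tree on $k$ leaves has leaf-depths $\{1,2,\ldots,k-1,k-1\}$, so its width sequence is $k$-thin, and the extremality of the associated permutation mirrors the spine structure of the tree. Hence (g1) and (g2) are equivalent respectively to $T_1$ and $T_2$ being thin. The heart of the proof is then to match (g3) with equation (\ref{eq1}) of Theorem \ref{mainThm}. The fraction $2^{-k-1+j}/2^{-j}$ appearing in $S(f)$ means exactly that the leaf of $T_1$ at depth $j$ is mapped by $\sigma$ to the leaf of $T_2$ at depth $k+1-j$; for $3\le j\le k-2$ each of these depths is attained by a unique leaf, so the statement is unambiguous. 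Under the weight encoding of thin trees, this depth pairing translates, term by term, into $r_i s_{k+1-i}=-1$ over the appropriate range, which is exactly equation (\ref{eq1}). Equation (\ref{eq2}), which fixes $\sigma(1)$, is encoded in the placement of the mark $\circ$ defining $\Delta_y^{\circ}(f)$, and is therefore already built into the choice of sequence being tested rather than an auxiliary rotation of it.

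The main obstacle is the bookkeeping underlying this depth/weight correspondence: one must pin down how each weight $r_i$, which records whether the internal vertex $v_i$ is the left or right child of $v_{i-1}$, controls the left-to-right index of the sibling leaf at depth $i+1$, and then verify that the pairing imposed by (g3) corresponds, under $\sigma$, to the algebraic identity $r_i s_{k+1-i}=-1$. Once this translation is in place, both directions follow: if $f\in PSL_2(\mathbb{Z})$, Proposition \ref{treesWord} supplies a thin tree pair diagram satisfying (g1)--(g3); conversely, from a $k$-good $S(f)$ one reads off via Lemma \ref{lemma} a thin tree pair diagram satisfying the hypotheses of Theorem \ref{mainThm}, forcing $f\in PSL_2(\mathbb{Z})$. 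The small cases in which the trees have fewer than four leaves are handled separately by inspecting the explicit table listed in the proof of Theorem \ref{mainThm}.
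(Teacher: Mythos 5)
Your dictionary between tree pair diagrams and the data $S(f)$ is sound, and for (g1), (g2) and the translation of (g3) you are essentially on the paper's track: Lemma \ref{lemma} handles thinness, and (g3) does pair the unique source leaf of depth $j$ with the unique target leaf of depth $k+1-j$, which (after the index shift between the $k$ of section 4 and the $k$ of Theorem \ref{mainThm}) is the paper's identity $r_j=-s_{k-1-j}$ and hence equation (\ref{eq1}); the paper carries out the bookkeeping you defer via a three-case analysis of where these fractions sit in $S(f)$. The genuine gap is your treatment of equation (\ref{eq2}). You assert that it ``is encoded in the placement of the mark $\circ$ \dots and is therefore already built into the choice of sequence,'' but this conflates two different statements: the mark does determine the cyclic permutation (equivalently $\sigma(1)$), yet nothing in the definition of $k$-good says that the $\sigma(1)$ so determined is the one demanded by (\ref{eq2}). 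Theorem \ref{mainThm} requires both (\ref{eq1}) and (\ref{eq2}), so when you ``read off a thin tree pair diagram satisfying the hypotheses of Theorem \ref{mainThm}'' from a $k$-good sequence, you have argued only for (\ref{eq1}); if the permutation forced by (g3) failed (\ref{eq2}), the element would not lie in $PSL_2(\mathbb{Z})$ even though (g1)--(g3) hold, so the compatibility must be proved, not presumed. This is exactly the step to which the paper devotes its four-case analysis: it locates the two entries of $S(f)$ with denominator $2^{-k+1}$ (the exposed caret of the source tree), distinguishes the four possible positions of the mark relative to them, and in each case reads off $s_0$, $s_1$ and the relation between $l$ and the mark position $p$, then checks (\ref{eq2}) directly. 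Some boundary check of this kind is indispensable in your write-up.

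A secondary difference: for the direction ``$f\in PSL_2(\mathbb{Z})$ implies $S(f)$ is $k$-good'' you propose a direct argument from Proposition \ref{treesWord}, whereas the paper closes that direction by counting (for a fixed $k$-thin $\Delta_x(f)$ there are exactly four $k$-good sequences, matching the four admissible target trees of Theorem \ref{mainThm}). Your direct route is viable, but it leans on the same deferred depth/weight bookkeeping, while the counting argument lets the paper prove only one implication in detail; in either organization, the verification of (\ref{eq2}) described above is the piece that must be supplied.
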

\begin{proof}
  For the `if' part, we first enumerate the $2-$good and $3-$good sequences $S(f)$ with
  their corresponding elements of $PSL_2(\mathbb{Z})$ in the following table.

  \begin{tabular}{|c|c|c|c|c|c|c|c|c|c|}
  \hline & & & & & & & & & \\
    $\displaystyle \frac{\circ 1}{1},\frac{1}{1}$ & $\displaystyle \frac{1}{1},\frac{\circ 1}{1}$
    & $\displaystyle \frac{\circ 1}{2},\frac{1}{1},\frac{2}{1}$ & $\displaystyle \frac{1}{2},\frac{1}{1},\frac{\circ 2}{1}$
    & $\displaystyle \frac{1}{2},\frac{2}{1},\frac{\circ 1}{1}$ & $\displaystyle \frac{1}{2},\frac{\circ 2}{1},\frac{1}{1}$
    & $\displaystyle \frac{2}{1},\frac{\circ 1}{1},\frac{1}{2}$ & $\displaystyle \frac{\circ 2}{1},\frac{1}{1},\frac{1}{2}$
    & $\displaystyle \frac{1}{1},\frac{2}{1},\frac{\circ 1}{2}$ & $\displaystyle \frac{1}{1},\frac{\circ 2}{1},\frac{1}{2}$
    \\ & & & & & & & & & \\
    1 & $a$ & $\bar{b}a$ & $\bar{b}$ & $ba$ & $b$ & $aba$ & $ab$ & $a\bar{b}a$ & $a\bar{b}$ \\ \hline
  \end{tabular}

  Now suppose that we are given a $k-$good sequence $S(f)$ with
  $k\geq 4$. Let $r_0,\ldots, r_{k-3}$ be the weights associated to
  $\Delta_x(f)$ and $s_0,\ldots,s_{k-3}$ be the weights associated to
  $\Delta_y^{\circ}(f)$ using the lemma \ref{lemma}.
  First, we will prove that $r_0, \ldots, r_{k-3}$ and $s_0,
  \ldots, s_{k-3}$ satisfy the equation \ref{eq1} of the theorem
  \ref{mainThm}, by showing that $r_j = -s_{k-1-j}$ for $2 \leq j \leq k-3$.

  From (g3) we know that, for $3 \leq j \leq k-2$, $S(f)$ contains the term $\frac{2^{-k+j-1}}{2^{-j}}$.
  Let $2 \leq i,j\leq k-3$. Thus, we have one of the following situations:
  \begin{enumerate}
  \item $S(f)=\displaystyle \left\{\frac{2^{-k+1}}{2^{-1}},\frac{2^{-k+1}}{2^{-2}}, S^{(1)}, \frac{2^{-k+i-2}}{2^{-i-1}}, S^{(2)},
  \frac{\circ}{2^{-k+1}}, \frac{\circ}{2^{-k+1}}, \frac{\circ}{2^{-m}}, S^{(3)},
  \frac{2^{-k+j-2}}{2^{-j-1}}, S^{(4)} ,\right\}$, or\\
  \item $S(f)=\displaystyle \left\{ S^{(1)}, \frac{2^{-k+i-2}}{2^{-i-1}}, S^{(2)},
  \frac{\circ}{2^{-k+1}}, \frac{\circ}{2^{-k+1}}, \frac{\circ}{2^{-m}}, S^{(3)},
  \frac{2^{-k+j-2}}{2^{-j-1}}, S^{(4)}, \frac{2^{-k+1}}{2^{-2}},\frac{2^{-k+1}}{2^{-1}}\right\}$, or\\
  \item $S(f)=\displaystyle \left\{\frac{2^{-k+1}}{2^{-1-\delta}}, S^{(1)}, \frac{2^{-k+i-2}}{2^{-i-1}}, S^{(2)},
  \frac{\circ}{2^{-k+1}}, \frac{\circ}{2^{-k+1}}, \frac{\circ}{2^{-m}}, S^{(3)},
  \frac{2^{-k+j-2}}{2^{-j-1}}, S^{(4)}, \frac{2^{-k+1}}{2^{-2+\delta}}\right\}$,
  \end{enumerate}
  where $\delta \in \{0,1\}$ and $S^{(q)}$ are subsequences, for $1 \leq q \leq 4$.
  Then, in each one of the above cases we have $r_{i}=-1=-s_{k-1-i}$ and $r_j=1=-s_{k-1-j}$.

  Now we will find a cyclic permutation $\sigma$ such that $s_0$, $s_1$
  and $\sigma(1)$ satisfy the equation (\ref{eq2}) of theorem \ref{mainThm}.
  Let $p$ be the position of the mark on $\Delta_y(f)$. Define $\sigma(1)$
  as the integer satisfying $\sigma(1)+p-1 \equiv 1$ (mod $k$).
  Let $l$ be the position of the first $2^{-k+1}$ on $\Delta_x(f)$. Since
  $S(f)$ is $k-$good we have four possible cases:
  \begin{enumerate}
    \item $S(f)=\displaystyle \left\{ S^{(1)},\frac{\circ 2^{-1}}{2^{-k+1}}, \frac{2^{-2}}{2^{-k+1}}, S^{(2)}\right\}$,
    so that $s_0=s_1=-1$ and $l \equiv p$ (mod $k$).
    \item $S(f)=\displaystyle \left\{ S^{(1)},\frac{2^{-1}}{2^{-k+1}}, \frac{\circ 2^{-2}}{2^{-k+1}}, S^{(2)}\right\}$,
    so that $s_0=1, s_1=-1$ and $l \equiv p-1$ (mod $k$).
    \item $S(f)=\displaystyle \left\{ S^{(1)},\frac{2^{-2}}{2^{-k+1}}, \frac{\circ 2^{-1}}{2^{-k+1}}, S^{(2)}\right\}$,
    so that $s_0=-1, s_1=1$ and $l \equiv p-1$ (mod $k$).
    \item $S(f)=\displaystyle \left\{ S^{(1)},\frac{2^{-2}}{2^{-k+1}}, \frac{2^{-1}}{2^{-k+1}} \frac{\circ}{2^{-m}}, S^{(2)}\right\}$,
    so that $s_0=s_1=1$ and $l \equiv p-2$ (mod $k$).
  \end{enumerate}
  which correspond exactly with equation (\ref{eq2}).

  Finally, for the `only if' part, it suffices to remark that
  given a $k-$thin sequence $\Delta_x(f)$, there are exactly
  four $k-$good sequences $S(f)$. This coincides with the number
  of thin marked trees satisfying the equations of theorem \ref{mainThm}
  that could be a target tree for a given thin source tree.
\end{proof}

\bibliographystyle{amsplain}
\bibliography{referencies}

\end{document}